\newtheorem{theorem}{Theorem}
\newtheorem{assumption}{Assumption}
\newtheorem{proposition}{Proposition}
\newtheorem{remark}{Remark}
\newtheorem{definition}{Definition}
\DeclareMathAlphabet{\pazocal}{OMS}{zplm}{m}{n}
\newcommand{\tr}[0]{{^\intercal}}
\newcommand{\resp}[1]{{\color{black} #1}}
\begin{document}

\title{\resp{An explicit dual control approach for constrained reference tracking of uncertain linear systems}}

\author{Anilkumar Parsi, Andrea Iannelli, Roy S. Smith, \IEEEmembership{Fellow, IEEE}
\thanks{This work was supported by the Swiss National Science Foundation under Grant 200021\_178890.}
\thanks{The authors are with the Automatic Control Laboratory, Swiss Federal Institute of Technology (ETH Z\"urich), 8092 Zurich, Switzerland (e-mail:
aparsi/iannelli/rsmith@control.ee.ethz.ch).}
}

\maketitle
\begin{abstract}
    A finite horizon optimal tracking problem is considered for linear dynamical systems subject to parametric uncertainties in the state-space matrices and exogenous disturbances. A suboptimal solution is proposed using a model predictive control (MPC) based explicit dual control approach which enables active uncertainty learning. A novel algorithm for the design of robustly invariant online terminal sets and terminal controllers is presented. Set membership identification is used to update the parameter uncertainty online. A predicted worst-case cost is used in the MPC optimization problem to model the dual effect of the control input. The cost-to-go is estimated using contractivity of the proposed terminal set and the remaining time horizon, so that the optimizer can estimate future benefits of exploration. 
    The proposed dual control algorithm ensures robust constraint satisfaction and recursive feasibility, and navigates the exploration-exploitation trade-off using a robust performance metric. 
\end{abstract}

\begin{IEEEkeywords}
dual control, model predictive control, reference tracking, safe adaptive control, \resp{active learning}
\end{IEEEkeywords}

\section{Introduction}
\IEEEPARstart{I}{n} control problems involving unknown plants, the importance of balancing probing actions aimed at inferring properties of the system (exploration) and greedy decisions that maximize the performance based on the current knowledge (exploitation) has been well understood since the pioneering work in adaptive control \cite{Astrom_book}. Feldbaum \cite{feldbaum1960dual} indicated that an optimal adaptive control law was characterized by the right combination of these two, often conflicting, properties, and coined the expression \emph{adaptive dual control systems}.
This optimal trade-off can be characterized by means of the principle of optimality, and thus the exact solution to the dual control problem is given by stochastic dynamic programming (DP) \cite{MESBAH_review}. Because of its well-known curse of dimensionality, tractable solutions to this problem are not available except for trivial cases. This motivated approximate solutions which can be broadly divided into two groups, namely implicit and explicit dual control methods \cite{Filatov_survey}. The former considers different types of approximations of the stochastic DP problems, e.g. using the wide-sense property from adaptive control \cite{Bar-Shalom_TAC73,Bayard_IJACSP_10} or making use of approximate DP methods such as policy iteration \cite{book_Powell_ADP}. To apply approximate DP algorithms to unknown systems,
a hyper-state is often considered, which augments the system's state with the vector of unknown parameters \cite{Lee_JPC_09,ARCARI20208105}. The main issue generally associated with implicit approaches is scalability, as evidenced by their application to date to only very low-order problems. On the other hand, explicit methods relax the original optimal control problem (OCP) into a more tractable form, and then reformulate it so that the solution exhibits the desired dual properties. Even though the optimality gap with respect to the original optimal dual policy is generally unknown, this often represents a much more computationally tractable solution and thus is by far the most established of the two approaches \cite{Milito_tac82,Pronzato_TAC96,DiCairano_CDC_14,HEIRUNG2017340,soloperto2019dual,Iannelli_LCSS_20}. In an important class of explicit methods, the dual properties are introduced in an application-oriented fashion \cite{LARSSON20161}. That is, probing actions that decrease a measure of the model uncertainty based on control-oriented measures (e.g. a robust performance criterion) are incentivized. 

An important consideration for the control of uncertain systems is the guaranteed satisfaction of state and input constraints, which is also referred to as safety in literature \cite{hewing2020learning}. Such safety guarantees can be achieved using model predictive control (MPC), an optimization-based technique where a control objective can be maximized while ensuring constraint satisfaction \cite{borrelli2017predictive}. 
When the plant is unknown, robust approaches need to be employed to guarantee stability and constraint satisfaction in the face of the uncertainty \cite{langson2004}, \cite{kouvaritakis2015model}. If the initial uncertainty is large, robustness will come at the cost of performance. Thus, it is beneficial to update the uncertainty set online using measurements gathered from the system.
An adaptive MPC (AMPC) scheme was developed in \cite{lorenzen2017adaptive} and \cite{lorenzen2019} for regulation of linear systems described by state-space models subject to polytopic parametric uncertainty. It uses a tube MPC approach to ensure robust constraint satisfaction and updates the uncertainty using set-membership identification \cite{milanese1991}. 
\resp{Adaptive MPC has also been proposed for nonlinear systems using a Lipschitz based method in \cite{adetola2011robust}, and a general framework has been developed in \cite{kohler2020robust}}. Nevertheless, there is no active uncertainty learning \resp{in these methods} since actions are designed for the purpose of exploitation only. \resp{ Another class of optimization-based methods for controlling unknown dynamical systems has recently emerged from the online learning community, where performance is commonly established by means of the concept of regret \cite{cohen2019}, \cite{hazan2020}. In the case of unknown dynamics, these methods typically consider unconstrained control problems and learning is effectively passive.}

In \cite{parsi2020active}, \resp{we} proposed an extension to \cite{lorenzen2019} which introduces dual actions in AMPC by means of a predicted state tube while guaranteeing constraint satisfaction. The problem considered was again only a regulation one, and was implicitly framed in an infinite horizon setting. For linear systems with model uncertainties, regulating the state to the origin is easier than performing setpoint tracking. This is because the input setpoint is assumed to be known, thereby simplifying the design of MPC ingredients. Although this issue is partially addressed in the tracking MPC literature, most methods assume that the system is affected only by either additive disturbances  \cite{maeder2009linear}, \cite{zeilinger2014soft}, \cite{di2015reference}, \cite{pereira2016robust} or parametric uncertainities \cite{pannocchia2004robust}, \cite{zhu2015adaptive}. Two related works are also \cite{HEIRUNG2017340,soloperto2019dual}, which proposed a dual adaptive tracking MPC algorithm with uncertainty in the measurement equation only. Moreover, studying the dual effect of the control input is much more relevant in a finite horizon setting \cite{Iannelli_LCSS_20}. This is because there is limited time for learning the uncertainty and probing actions have decreasing rewards with time. Thus, estimating the benefits of exploratory control actions is difficult in this setting, compared to an infinite horizon setting where it is almost always beneficial to reduce model uncertainty. We therefore view finite horizon problems as a more meaningful, albeit difficult, domain in which the exploration-exploitation trade-off can be studied.

The goal of this paper is to propose an explicit dual control approach for finite horizon optimal tracking control of uncertain systems described by parametric state-space models.  There are two main technical contributions. \resp {First, we propose a novel tracking adaptive MPC formulation in Section \ref{Sec:Tracking} for plants subject to parametric uncertainty in addition to exogenous disturbances. The proposed algorithm estimates the uncertain setpoints online and uses an online terminal set and a control law dependent on the estimated setpoints. The second contribution of the work is to leverage the viewpoint of MPC as an approximate DP method \cite{hewing2020learning, BERTSEKAS2005310} to obtain an application-oriented dual control algorithm to solve the finite horizon optimal control problem. That is, the proposed algorithm navigates the exploration-exploitation trade-off using a performance based metric as described in Section \ref{Sec:Exploration}. } To the best of the authors' knowledge, this is thus the first dual adaptive MPC scheme for optimal reference tracking of systems with exogenous disturbances and model mismatch in the dynamics.



\subsection{Problem formulation and methodology}
Consider an uncertain, discrete-time, linear system of the form
\begin{equation}\label{eq:Dynamics}
    x_{t+1} = A(\theta) x_t + B(\theta) u_t + w_t,
\end{equation}
where  $x_t \in \mathbb{R}^n$ represents the state, $u_t \in \mathbb{R}^m$ the control input and $w_t \in \mathbb{R}^n$ the additive disturbance at the time step $t$. The state space matrices $A(\theta)$ and $B(\theta) $ have the parametric description
\begin{align} \label{eq:Parameterization}
\begin{split}
    A(\theta) = A_0 + \displaystyle\sum_{i=1}^{p} A_i [\theta]_i, \quad  B(\theta) = B_0 + \displaystyle\sum_{i=1}^{p} B_i [\theta]_i,
\end{split}
\end{align}
where $\theta \in \mathbb{R}^p$ is an unknown constant parameter, $[\theta]_i$ refers to the $i^{\text{th}}$ element in $\theta$ and $\{A_i,B_i\}_{i=0}^{p}$ are known matrices modeling structured uncertainty. The true value of $\theta=\theta^*$ is known to lie inside the bounded set
\begin{equation}\label{eq:ParameterBounds}
    \Theta := \{\theta \in \mathbb{R}^p | H_{\theta} \theta \le h_{\theta} \},
\end{equation}
where $H_{\theta} \in\mathbb{R}^{n_\theta \times p}$ and $h_{\theta} \in \mathbb{R}^{n_{\theta}}$. The disturbance $w_t$ has unknown statistical properties, but always lies within the bounded set
\begin{equation}\label{eq:DisturbanceBounds}
    \mathbb{W}:= \{w\in\mathbb{R}^n |  H_{w} w \le h_{w} \},
\end{equation}
where $ H_w \in \mathbb{R}^{n_w\times n}, h_w\in\mathbb{R}^{n_w}$. 
\resp{
\begin{assumption}\label{As:StateMeasurements}
	The state of the system is perfectly measured at each time step.
\end{assumption}
}
The output to be tracked is given by
\begin{equation}\label{eq:Outputs}
y_t = C x_t,
\end{equation}
where $y_t\in\mathbb{R}^{n_y}$.
The states and inputs of the system are required to satisfy the constraints
\begin{equation} \label{eq:Constraints}
\mathbb{Z} = \left\{(x_t,u_t) \in \mathbb{R}^n \times \mathbb{R}^m \bigr|  F x_t + G u_t \le \mathbf{1}\right\},
\end{equation}
where $ \mathbb{Z} $ is a compact set, and $ F \in \mathbb{R}^{n_c \times n} $ and  $ G \in \mathbb{R}^{n_c \times m} $ are given matrices. Given an initial state $\overset{\circ}{x}$, the ideal objective would be to compute the control \resp{policy} $\Pi:= \{u_t = \pi_t(x_t)\}_{t=0}^{T}$ which solve the following finite horizon optimal control problem (FHOCP) 
\begin{subequations}\label{eq:FHOCP}
\begin{align}
    \displaystyle\min_{\Pi} \max_{w_t} \quad \sum_{t=0}^{T} ||Q(y_{t}{-}r_{t})||_{\infty} {+} ||R(&\pi_{t}(x_t){-}u^*_t)||_{\infty}, \label{eq:FHOCP1}\\
    \text{s.t.}  \quad A(\theta^*) x_t + B(\theta^*) \pi_t(x_t) + w_t &= \resp{   x_{t+1}}, \label{eq:FHOCP2}\\
                        F x_t + G \pi_t(x_t) &\le \mathbf{1}, \label{eq:FHOCP3}\\
                        y_t &= C x_t,\\
                        \resp{A(\theta^*) x_t^* + B(\theta^*) u_t^*} &\resp{= x_{t+1}^*,}  \label{eq:FHOCP6}\\
                        \resp{C x_t^*} &\resp{= r_t , \quad   \forall t \in \mathbb{N}_0^T, } \label{eq:FHOCP7} \\
                        x_0 = \overset{\circ}{x}, \quad x_0^{*} &= \overset{\circ}{x}, \label{eq:FHOCP5} 
\end{align}
\end{subequations} 
where $T$ is the length of the finite control horizon, $Q\in\mathbb{R}^{n\times n}$ and $R\in\mathbb{R}^{m\times m}$ are positive definite matrices, and $\mathbb{N}_0^T$ denotes the set of integers from 0 to $T$. In addition, $\{r_t\}_{t=0}^{T}$ is a known reference trajectory for the output, $\{u^*_t\}_{t=0}^{T}$ is a sequence of unknown input setpoints, \resp{and $\{x^*_t\}_{t=0}^{T}$ represents an unknown desired state trajectory satisfying \eqref{eq:FHOCP6},\eqref{eq:FHOCP7}.} \resp{For legibility, the state evolution predicted by the FHOCP is represented using the same vaiables  $\{x_t\}_{t=0}^{T}$ used for the evolution of the true state in \eqref{eq:Dynamics}}. \resp{Solving} \eqref{eq:FHOCP} is difficult because $\theta^*$ is not known, which results in an uncertain cost function \eqref{eq:FHOCP1}, state evolution \eqref{eq:FHOCP2} \resp{ and setpoints \eqref{eq:FHOCP6}}. 
Hence, the objective of this work is to compute suboptimal policies to minimize the cost \eqref{eq:FHOCP1}  for the uncertain system \eqref{eq:Dynamics} while satisfying the constraints \eqref{eq:Constraints}.
\resp{Without any loss in robustness of constraint satisfaction, the setpoint can be estimated using a nominal parameter estimate $ \bar{\theta} $.} In order to ensure robustness of constraint satisfaction in face of the uncertainty, one can solve the robust optimization problem
\begin{align}\label{eq:FHOCP_minmax}
\begin{split}
    \displaystyle\min_{\Pi} \max_{\substack{\theta\in \Theta,\\ w_t\in\mathbb{W}}} \: \sum_{t=0}^{T} ||Q(y_{t}-r_{t})||_{\infty} {+} ||R(&\pi_{t}(x_t)-u^*_t)||_{\infty}, \\
    \text{s.t.}  \quad  A(\theta) x_t + B(\theta) \pi_t(x_t) + w_t &=\resp{ x_{t+1},} \\
                        F x_t + G \pi_t(x_t) &\le \mathbf{1}, \\
                        y_t &= C x_t, \\
                        \resp{A(\bar{\theta} ) x_t^* + B(\bar{\theta} ) u_t^*} &\resp{= x_{t+1}^*,} \\
                        \resp{C x_t^*} &\resp{= r_t , \quad   \forall t \in \mathbb{N}_0^T, } \\
                        x_0 = \overset{\circ}{x}, \quad x_0^{*} &= \overset{\circ}{x}.
\end{split}
\end{align}
Nevertheless, the robustness guarantees provided by \eqref{eq:FHOCP_minmax} could also result in conservatism when $\Theta$ is large. This conservatism can be reduced by using the measurements \resp{from the system} to reduce the size of the uncertainty set \eqref{eq:ParameterBounds}. To this aim, set membership identification \cite{milanese1991} is used in this work (Section \ref{Sec:ParameterIdentification}). In this approach, the uncertainty set $\Theta$ is updated by constructing non-falsified parameter sets from state measurements and control inputs. This results in a dual effect for the control, where the control inputs affect both tracking quality and identification performance. It is well known that such problems can be highly non-convex, even for scalar systems with small control horizons $T$ \cite{klenske2016dual}. For this reason, the control problem \eqref{eq:FHOCP_minmax} is reformulated as described below. 

The control policies $\pi_t(x_t)$ are restricted to be time invariant and affine in the state variable, and are formulated by using an estimate of the state and input setpoint, and a prestabilizing feedback gain (Section \ref{Sec:ControlParameterization}). At each time step $k$, a feasible input sequence for a shorter time interval $[k,k+N-1]$ (called the prediction horizon, with length $N$) is computed using MPC. In addition, a tube MPC approach \cite{rakovic2012homothetic} is used to construct a robust state tube consisting of all the states that the system could reach in the prediction horizon under the defined control policy and an updated parameter set (Section \ref{Sec:RobustTube}). 
Tube MPC controllers ensure that the last set of the state tube lies inside a robust invariant set called the terminal set, which is in general computed offline. 
In Section \ref{Sec:TerminalSets}, an online algorithm to design robustly invariant terminal sets is proposed, which uses the updated parameter set to improve the tracking performance. 

To enable dual control, the effect of control input on the identification is modeled in Section \ref{Sec:PredParSet}, and a predicted state tube is constructed in Section \ref{Sec:PredictedTube}. The robust cost function in \eqref{eq:FHOCP_minmax} is then approximated by a worst-case cost over the predicted state tube (Section \ref{Sec:PredictedCost}). 
This captures the dual effect of the control using an application-oriented metric \cite{LARSSON20161}. Moreover, the cost to be incurred after the end of the prediction horizon (hereafter called cost-to-go) is approximated by using the remaining time in the control horizon and the contractivity of the terminal set. This allows a novel trade-off between exploration and exploitation which will depend on, among other things, the number of time steps remaining in the control horizon. The dual adaptive MPC (AMPC) algorithm and its properties are discussed in Section \ref{Sec:DAMPC}.

\subsection{Notation}
The sets of real numbers and non-negative real numbers are denoted by $ \mathbb{R} $ and $ \mathbb{R}_{\ge0} $ respectively. 
For a vector $ b $, $||b||_k$ represents the $k-$norm for $k\in\{2,\infty\}$. The $ i ^{th}$ row of a matrix $ A $ is denoted by $ [A]_{i} $, \resp{and $  \otimes $ represents the Kronecker product}. The dimensions of matrices and vectors are not always specified when they can be inferred from the context.  \resp{For any real scalar-valued function $ J $, $ \displaystyle\max_{h \in \mathbb{H}} J(h)$ refers to the maximization of $ J $ over the set $ \mathbb{H} $. }
The Minkowski sum of two sets $ A$ and $ B $ is denoted by $ A \oplus B $, and $ \mathbf{1} $ denotes a column vector of appropriate length whose elements are equal to 1.  The convex hull of the elements of a set S is represented by co\{S\}. The notation $ a_{l|k} $ denotes the value of $ a $ at time step $ k+l $ computed at the time step $ k $. The identity matrix of size $n\times n$ is denoted by $I_n$. 
%

\section{Tracking formulation for AMPC}\label{Sec:Tracking}
In this section, the constraints in FHOCP  \eqref{eq:FHOCP_minmax} are reformulated using MPC and set membership identification is introduced to reduce the size of the uncertainty set online. An existing tube MPC approach is modified to enable setpoint tracking by defining the control law as a function of the reference trajectory and \resp{the} estimates of the system's uncertain parameters. Offline and online designs of terminal sets are then proposed. 

\subsection{Parameter identification}\label{Sec:ParameterIdentification}
Set-membership identification is a technique used to identify systems affected by bounded noise with unknown statistical properties \cite{milanese1991}. In this method, the measurement data is used to update a set of feasible parameters, denoted as $\Theta_t$. In order to ensure that the number of constraints in $\Theta_t$ does not increase at every time step, it is defined as
\begin{equation} \label{eq:Theta_k_def}
\Theta_t :=  \{\theta \in \mathbb{R}^p| H_{\theta} \theta\le h_{\theta_t}\},
\end{equation}
where $ h_{\theta_t}$ initialized with $h_{\theta}$, and updated online using measurement data. To perform the update, a set of non-falsified parameters is constructed using the last $\tau$ measurements as
\begin{align} \label{eq:SimpleNonfalsified}
\Delta_{t} &:= \biggr\{
\theta \in \mathbb{R}^{p}\: \biggr| \: -H_w D_i \theta  \le h_w+H_w d_{i+1}, \forall i \in \mathbb{N}_{t-\tau}^{t-1}
\biggr\}\nonumber\\ 
&= \left\{\theta \in \mathbb{R}^{p}\: |\: H_\Delta \theta \le h_\Delta \right\},
\end{align}
where $D_i:= D(x_i,u_i) \in\mathbb{R}^{n\times p}$ and $d_{i+1}\in \mathbb{R}^{n}$ are 
\setlength\arraycolsep{2pt}
\begin{align}\label{eq:Dtdt}
\begin{split}
    D_i =& \begin{bmatrix}
         A_1 x_i {+} B_1 u_i, & \ldots, & A_p x_i {+} B_p u_i
    \end{bmatrix},  \\
    d_{i+1} :=& \: A_0 x_t + B_0 u_t - x_{i+1}, \qquad \forall i \in \mathbb{N}_{t-\tau}^{t-1}.
\end{split}
\end{align}
Using \eqref{eq:SimpleNonfalsified}, $\Theta_t$ is updated such that
\begin{equation}\label{eq:Theta_k_update}
 \Theta_t \supseteq \Theta_{t-1} \cap \Delta_t 
\end{equation}
is satisfied. This is ensured by calculating $  h_{\theta_t} $ according to \resp{the $ {n_\theta} $ linear programs
( $ \forall  i \in \mathbb{N}_1^{n_\theta} $) }
\begin{align}\label{eq:Theta_k_LP}
\begin{split}
[h_{\theta_t}]_i \: =\: &\underset{\theta}{\resp{\text{max}}} \quad  [H_\theta]_{i} \theta\\
 \text{s.t. } \quad  
 \begin{bmatrix}
    H_\theta \\ H_\Delta
\end{bmatrix} \theta
 &\le \begin{bmatrix}
  h_{\theta_{t-1}} \\ h_{\Delta}
 \end{bmatrix}.\\
\end{split}
\end{align}
\resp{
The optimization problems in \eqref{eq:Theta_k_LP} ensure that the set update satisfies $\Theta_t \subseteq \Theta_{t-1} $, because the maximum value of the objective is upper bounded by $ [h_{\theta_{t-1}}]_i$ in the constraints. Thus, the parameter identification in \eqref{eq:Theta_k_LP} results in sets $\Theta_t$ of non-increasing size. 
}
\resp{Note that the the quality of identification improves by increasing the number of hyperplanes chosen to represent the set $\Theta_t$. However, this results in an increase in the computational complexity of the MPC optimization problem later formulated in Section \ref{Sec:DAMPC}.}

\resp{In addition, a parameter estimate $\bar{\theta}_k$ is computed, which will be used to define the control law and the setpoints in the cost function.} At each time step, this estimate is computed \resp{using} the center of the maximum volume $\ell_2$-norm ball that can be inscribed inside the set $\Theta_k$, \resp{also known as the Chebyshev center of the set \cite{boyd2004convex}}. The estimate, $\bar{\theta}_k$, is \resp{the optimal value of $ \theta_c $ in the following quadratic program}
\begin{align}\label{eq:ChebyshevMax}
\begin{split}
\displaystyle\max_{\resp{r_c},\theta_c}\quad \resp{r_c} - \mu ||\theta_c&-\bar{\theta}_{k-1}||_2^2, \\
\text{s.t.} \qquad [H_\theta]_i \theta_c + \resp{r_c}||[H_\theta ]_i||_{\resp{2}} &\le h_{\theta_k}, \quad \forall i \in \mathbb{N}_1^{n_\theta},
\end{split}
\end{align}
\resp{where the second term in the cost function ensures that the problem has a unique solution, and the parameter $ \mu>0 $ must be chosen to trade-off rate of change in the parameter estimate with the distance from the Chebyshev center of the set.} 

\resp{ 
	\begin{remark}
		The Chebyshev center of a set reduces the deviation of the estimate from the true parameter based on all the points in parameter set. Alternatively, one can use a least mean squares or a recursive least squares filter \cite{haykin2014Adaptive} to obtain the parameter estimate.
	\end{remark}
}
\subsection{Control policy parameterization}\label{Sec:ControlParameterization}
In order to formulate a computationally tractable optimization problem, a receding horizon approach is used, where at each time step $k$, a feasible input sequence is computed for the prediction horizon $[k,k+N-1]$. Note that $k$ will be used as the time index while formulating the MPC optimization problem at time step $k$. Within the prediction horizon, the control policies $\pi_k(x_k)$ are restricted to be affine functions of $x_k$, with the linear term being time-invariant. The policies also depend on estimates of the state and input setpoints, which are computed using the parameter estimate $\bar{\theta}_k$.

In order to estimate the state and input setpoints $(x_k^*,u_k^*)$ along the prediction horizon, the following set of linear constraints can be defined along the lines of \resp{\eqref{eq:FHOCP6},\eqref{eq:FHOCP7}}
\begin{equation}\label{eq:SetpointSubspace}
    \begin{bmatrix}
    A(\bar{\theta}_k) & B(\bar{\theta}_k) \\
    C & 0
    \end{bmatrix} \begin{bmatrix}
    \bar{x}_{l|k,k}\\\bar{u}_{l|k,k}
    \end{bmatrix} = \begin{bmatrix}
    \bar{x}_{l+1|k,k} \\ r_{k+l}
    \end{bmatrix} , \quad \forall l \in \mathbb{N}_{0}^{N-1},
\end{equation}
where the subscripts in $\bar{x}_{l|k,k}$ denote that the estimate of the setpoint at time step $k+l$ (first subscript) was computed using the parameter estimate $\bar{\theta}_k$ (second subscript). The unknown setpoints $(x_{l+k}^*,u_{l+k}^*)$ can be approximated by $(\bar{x}_{l|k,k},\bar{u}_{l|k,k})$ within the MPC horizon, i.e., for $ l \in \mathbb{N}_{0}^{N-1} $. In addition, the setpoints at the end of the prediction horizon are defined as
\begin{equation}\label{eq:SetpointSubspaceTerm}
\begin{bmatrix}
A(\bar{\theta}_k\resp{)} {-}I_n & B(\bar{\theta}_k) \\
C & 0
\end{bmatrix} \begin{bmatrix}
\bar{x}_{N|k,k}\\\bar{u}_{N|k,k}
\end{bmatrix} = \begin{bmatrix}
0 \\ r_{k+N}
\end{bmatrix}.
\end{equation}
The constraints \eqref{eq:SetpointSubspaceTerm} define the setpoints $ (\bar{x}_{N|k,k}, \bar{u}_{N|k,k}) $ as an equilibrium state and input, which will be used in the design of online terminal sets in Section \ref{Sec:TerminalSets}. 

\begin{assumption}\label{As:FullRankEstimate}
    The system satisfies $m\ge n_y$, i.e., the system has at least as many inputs as the desired outputs to be tracked. Additionally, for all $\theta\in \Theta$, the system satisfies
    \begin{align}\label{eq:FullRankEstimate}    
    \begin{split}
    \text{rank} \left(\begin{bmatrix}
    A_{\text{st}}({\theta}) & B_{\text{st}}({\theta}) \\
    C_{\text{st}} & 0
    \end{bmatrix} \right) &{=} N(n{+}n_y), \\
   	\text{rank} \left(\begin{bmatrix}
       A({\theta}) {-} I_n & B({\theta}) \\
       C & 0
       \end{bmatrix} \right) &{=} n{+}n_y,
   \end{split}
    \end{align}
    where the matrices $  A_{\text{st}}({\theta}) \in \mathbb{R}^{Nn\times Nn},  B_{\text{st}}({\theta})  \in \mathbb{R}^{Nm\times Nm}$ and  $ C_{\text{st}}  \in \mathbb{R}^{Nn_y\times Nn_y} $ are defined as
    \begin{align}
    \begin{split}
    	 A_{\text{st}}({\theta}) &= \begin{bmatrix}
    	 A(\theta) & {-I_n} & 0 & & \dots & 0 \\
    	 0& A(\theta) & {-I_n} & & \dots & 0 \\
    	 \vdots &  & \ddots & & & \vdots \\
    	 0& &\dots &  & A(\theta) & {-}I_n  \\
    	 0 &  & \dots & & & A(\theta)
    	 \end{bmatrix}, \\
    	 B_{\text{st}}({\theta}) &= I_N \otimes B(\theta),\quad  C_{\text{st}} = I_N \otimes C,
  	 \end{split}
    \end{align}
    and $ \otimes $ denotes the Kronecker product operator.
\end{assumption}
\resp{ Note that Assumption \ref{As:FullRankEstimate} is made so that there exists a state and input setpoint sequence satisfying \eqref{eq:SetpointSubspace}, \eqref{eq:SetpointSubspaceTerm} for any given reference trajectory and any parameter in the set $\Theta_0$. Similar assumptions }  are made in tracking MPC literature considering systems with model uncertainty in the system matrices \cite{pannocchia2004robust}. 

\resp{The control inputs in the prediction horizon are now parameterized as}
\begin{equation}\label{eq:InputParameterization}
u_{l|k}\resp{(x)} = K(\resp{x}-\bar{x}_{l|k,k}) + v_{l|k}, \quad \forall l\in \mathbb{N}_{0}^{N-1},
\end{equation}
where $ \{v_{l|k}\}_{l=0}^{N-1} $ are decision variables in the MPC optimization problem and $K\in \mathbb{R}^{m\times n}$ is a prestabilizing feedback matrix which satisfies the following assumption.
\begin{assumption}\label{As:Feedback}
	\resp{The parameter set $\Theta_0$ is such that there exists a feedback gain $K$ which asymptotically stabilizes $ A_{\text{cl}}(\theta)  = A( \theta) + B(\theta)K, \: \forall \theta \in \Theta$. }
\end{assumption}
\resp{Such a stabilizing feedback gain }$ K $ can be computed using standard robust control techniques, for example, as suggested in \cite{kothare1996}.

\subsection{Robust state tube}\label{Sec:RobustTube}
The robust tube MPC approach proposed in \cite{lorenzen2017adaptive} is now modified for the tracking problem using the setpoints $\bar{x}_{l|k,k}$ in the prediction horizon. A robust state tube is constructed to contain all the states that the system can reach under the control law \eqref{eq:InputParameterization}, for all realizations of $\theta\in\Theta_k$ and $w\in \mathbb{W}$. Specifically, the state tube $\{\mathbb{X}_{l|k}\}_{l=0}^{N}$ satisfies
\begin{align}\label{eq:SetDynamics}
\mathbb{X}_{0|k} \ni \{x_{k}\},  \quad
\mathbb{X}_{l+1|k} &\supseteq A(\theta)\mathbb{X}_{l|k}  \oplus B(\theta)u_{l|k} \oplus \mathbb{W},\\
& \qquad  \forall \theta \in \Theta_k, \quad l \in \mathbb{N}_{0}^{N-1}. \nonumber
\end{align}
To ensure that the representation of the sets $\mathbb{X}_{l|k}$ is tractable, they are defined as a sequence of homothetic sets \cite{rakovic2012homothetic}, \resp{that is, as a translation and scaling of a predefined polytope $\mathbb{X}_0 $. This polytope is designed offline, and has the description  }
\begin{align}
 \mathbb{X}_0 := \{x| H_x x \le \mathbf{1}\} = \text{co}\{x^{1},x^{2},\ldots,x^{q}\},
\end{align}
where $ H_x \in \mathbb{R}^{n_x\times n} $ and  $\{x^{1},x^{2},\ldots,x^{q}\} $ are the vertices of $\mathbb{X}_0$. 
\resp{ In the online optimization, the center of $\mathbb{X}_0$ is translated using $ z_{l|k} \in \mathbb{R}^{n}$ and its size is scaled by $ \alpha_{l|k} \in \mathbb{R}_{\ge0}$ to define the set $\mathbb{X}_{l|k} $. Thus, the propagation of the state tube is defined by the optimization variables $\{z_{l|k} ,\alpha_{l|k}\}_{l=0}^{N}$ according to}
\begin{align}\label{eq:StateTubeParameterization}
\mathbb{X}_{l|k} &= \{z_{l|k}\} \oplus \alpha_{l|k} \mathbb{X}_{0} \quad = \{x| H_x (x-z_{l|k}) \le \alpha_{l|k} \mathbf{1}\}\nonumber\\
&= \{z_{l|k}\} \oplus \alpha_{l|k} \text{co}\{x^{1},x^{2},\ldots,x^{q}\}. 
\end{align}

Using the above parameterization, the constraints \eqref{eq:Constraints} and set dynamics \eqref{eq:SetDynamics} can be reformulated as linear constraints in terms of the optimization variables. For this purpose, the following notation is used
\begin{align}\label{eq:x_jlk}
\begin{split}
x_{l|k}^{j} &= z_{l|k} + \alpha_{l|k} x^{j}, \quad u_{l|k}^{j} {=} K(x_{l|k}^{j} - \bar{x}_{l|k,k}) {+} v_{l|k},  \\
 D_{l|k}^{j} &= D(x_{l|k}^{j},u_{l|k}^{j}), \quad  d_{l|k}^{j} {=} A_0 x_{l|k}^{j} {+} B_0 u_{l|k}^{j} - z_{l+1|k},
\end{split}
\end{align}
where $  j \in \mathbb{N}_{1}^{q} ,l \in \mathbb{N}_{0}^{N-1} $. Note that unlike the definition in \eqref{eq:Dtdt} where $ D_t$ and $d_t $ are functions of known states and inputs, the quantities $ D_{l|k}^{j}$ and $ d_{l|k}^{j} $ linearly depend on the optimization variables.  Additionally, the vectors $ \bar{f}$ and $\bar{w} $ are computed offline such that for $  i \in \mathbb{N}_{1}^{n_c},  j \in \mathbb{N}_{1}^{n_x}  $,
\begin{equation}\label{eq:f_bar}
\begin{split}
 [\bar{f}]_{i} &= \underset{x\in \mathbb{X}_0}{\resp{\text{max}}} [F+GK]_i x,\quad [\bar{w}]_{j} = \underset{w\in \mathbb{W}}{\resp{\text{max}}}  \: [H_x]_j w.
\end{split}
\end{equation}
The following proposition reformulates the robust constraints and set-dynamics  as affine constraints.
\begin{proposition}[\resp{\cite[Proposition 9]{lorenzen2017adaptive}}]\label{Pr:SetDynamics}
	Let the state tube $ \{\mathbb{X}_{l|k}\}_{l=0}^{N} $ be parameterized according to \eqref{eq:StateTubeParameterization}. Then, the constraints \eqref{eq:Constraints} and set-dynamics \eqref{eq:SetDynamics} are satisfied if and only if $\forall  j{\in} \mathbb{N}_{1}^{q} $, $ l{\in} \mathbb{N}_{0}^{N-1}$, there exist matrices $ \Lambda_{l|k}^{j} \in \mathbb{R}^{n_x\times n_\theta}_{\ge0}$ with non-negative elements  such that 
	\begin{subequations}\label{eq:lambdaConstraints}
		\begin{align}
		(F+GK)z_{l|k} + Gv_{l|k} \resp{- GK\bar{x}_{l|k,k}} &+ \alpha_{l|k}\bar{f} \le \mathbf{1},\\
		-H_x z_{0|k} -\alpha_{0|k}\mathbf{1} &\le -H_x x_k ,		\label{eq:x0Constraint}\\
		\Lambda_{l|k}^{j} h_{\theta_k} + H_x d_{l|k}^{j} -\alpha_{l+1|k} \mathbf{1} &\le -\bar{w} ,\label{eq:InclusionIneq}\\
		H_x D_{l|k}^{j} &= \Lambda_{l|k}^{j} H_{\theta} \label{eq:InclusionEqual}.
		\end{align} 
	\end{subequations}
\end{proposition}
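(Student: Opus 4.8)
The plan is to verify the three groups of inequalities in \eqref{eq:lambdaConstraints} one at a time, in each case rewriting the geometric requirement (a pointwise constraint, a point-in-polytope condition, or a robust set inclusion) as a \emph{robust} linear inequality that must hold for all $\theta\in\Theta_k$, and then to convert that semi-infinite inequality into the existence of the nonnegative multiplier matrices $\Lambda_{l|k}^{j}$ by linear programming duality. The reverse direction of the ``if and only if'' will then follow from weak duality, since any feasible $\Lambda_{l|k}^{j}$ produces a dual certificate for the corresponding robust inequality.

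First I would dispatch the two easy conditions. Substituting the control law \eqref{eq:InputParameterization} into $Fx+Gu\le\mathbf 1$ gives $(F+GK)x - GK\bar x_{l|k,k} + Gv_{l|k}\le\mathbf 1$, which must hold for every $x\in\mathbb X_{l|k}$. Writing $x = z_{l|k} + \alpha_{l|k}\tilde x$ with $\tilde x\in\mathbb X_0$ and using $\alpha_{l|k}\ge 0$, the left-hand side is affine in $\tilde x$, so its maximum over $\mathbb X_0$ is attained row by row at the precomputed value $[\bar f]_i$ from \eqref{eq:f_bar}; this is exactly the first inequality of \eqref{eq:lambdaConstraints}. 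The requirement $x_k\in\mathbb X_{0|k}$ is just the defining inequality of the homothetic set \eqref{eq:StateTubeParameterization} evaluated at $x_k$ and rearranged, which is \eqref{eq:x0Constraint}.

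The core of the argument is the inclusion \eqref{eq:SetDynamics}. Since each $\mathbb X_{l+1|k}$ is convex and, for fixed $\theta$ and $w$, the map $x\mapsto A(\theta)x + B(\theta)u_{l|k}(x) + w$ is affine in $x$, the image of $\mathbb X_{l|k}=\mathrm{co}\{x_{l|k}^{1},\dots,x_{l|k}^{q}\}$ equals the convex hull of the images of its vertices; hence the inclusion holds for all $x\in\mathbb X_{l|k}$ if and only if it holds at each vertex $x_{l|k}^{j}$. At a vertex, substituting $u_{l|k}^{j}$ and expanding $A(\theta),B(\theta)$ via \eqref{eq:Parameterization} yields $A(\theta)x_{l|k}^{j}+B(\theta)u_{l|k}^{j} = A_0x_{l|k}^{j}+B_0u_{l|k}^{j}+D_{l|k}^{j}\theta$; absorbing the additive disturbance into $\bar w$ by the row-wise maximum from \eqref{eq:f_bar}, the vertex inclusion becomes the requirement that $H_xD_{l|k}^{j}\theta + H_xd_{l|k}^{j} + \bar w \le \alpha_{l+1|k}\mathbf 1$ hold for all $\theta$ with $H_\theta\theta\le h_{\theta_k}$. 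Applying strong LP duality to each row $r$, $\max\{[H_xD_{l|k}^{j}]_r\theta \mid H_\theta\theta\le h_{\theta_k}\} = \min\{\lambda\tr h_{\theta_k}\mid \lambda\ge 0,\ H_\theta\tr\lambda = [H_xD_{l|k}^{j}]_r\tr\}$, which is finite because $\Theta_k$ is nonempty and bounded ($\Theta_k\subseteq\Theta$ is compact); collecting these dual vectors over all $n_x$ rows into a nonnegative matrix $\Lambda_{l|k}^{j}$ gives precisely \eqref{eq:InclusionEqual} together with \eqref{eq:InclusionIneq}.

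The step I expect to require the most care is the reduction of the set inclusion to a finite set of vertex conditions: one has to disentangle the convex hull structure of $\mathbb X_{l|k}$, the Minkowski sum with $\mathbb W$, and the quantification over $\theta\in\Theta_k$, and in particular keep $\theta$ outside the vertex reduction so that it can be handled afterwards by duality. One should also confirm that the auxiliary LP in the duality argument is well posed (feasible and bounded) so that strong duality applies with zero gap, which is where nonemptiness and boundedness of $\Theta_k$ enter.
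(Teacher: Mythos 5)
Your argument is correct and follows essentially the same route as the source the paper relies on: the paper does not prove this proposition itself but cites \cite[Proposition 9]{lorenzen2017adaptive}, whose proof (echoed in this paper's own proof of Proposition \ref{Pr:TerminalSet}) is exactly your combination of row-wise maximization for the constraint and disturbance terms, reduction of the tube inclusion to the vertices of $\mathbb{X}_{l|k}$ by affinity of the closed-loop map in $x$, and strong/weak LP duality over $\Theta_k$ to introduce the nonnegative multipliers $\Lambda_{l|k}^{j}$. No gaps; your caveat about keeping $\theta$ outside the vertex reduction and about well-posedness of the dual (guaranteed since $\Theta_k$ is a nonempty compact polytope containing $\theta^*$) is precisely the care the cited proof takes.
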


\subsection{Terminal set}\label{Sec:TerminalSets}
The system's state must not be driven into the regions of the state space which could result in constraint violations after the prediction horizon. This can be guaranteed \resp{by ensuring the MPC algorithm is recursively feasible}, which is generally achieved using a robustly invariant terminal set and a terminal control law.

\begin{definition}[Robust Invariance {\cite{kouvaritakis2015model}} ]\label{De:RobustInvariance}
 A set $\mathbb{X}$ is said to be robustly invariant under the dynamics \eqref{eq:Dynamics} and a terminal control law $\pi(x)$ if $A(\theta)x + B(\theta)\pi(x) + w \in \mathbb{X}$ and $(x,\pi(x)) \in \mathbb{Z}$ for all $x \in \mathbb{X}, \theta \in \Theta$ and $w\in\mathbb{W}$. 
\end{definition}

In this paper, we use a similar approach to that followed in \cite{parsi2020active} and impose terminal constraints to make sure that $\mathbb{X}_{N|k}$ lies inside an invariant set. As the controller in \cite{parsi2020active} is designed to perform regulation, the center $z_{N|k}$ \resp{was set as} the origin and $\alpha_{N|k}$ is bounded by a precomputed constant $\bar{\alpha}$. However, a different strategy must be employed for reference tracking to account for the uncertainty in the input setpoint. We propose two different methods, offline and online, for the design of terminal sets.
\subsubsection{Offline terminal set design}
To design the terminal set offline, the initial parameter estimate $\bar{\theta}_0$ is used to generate setpoint estimates $(\bar{x}_{N|k,0},\bar{u}_{N|k,0})$ for $k\in \mathbb{N}_0^{T-N}$. These setpoints might not lie in the feasible region $\mathbb{Z}$. For this reason, the terminal set to be used at time step $k$ is defined using an artificial setpoint $(\tilde{x}_{N|k,0},\tilde{u}_{N|k,0})$ and an upper bound $\tilde{\alpha}_{N|k} \in \mathbb{R}_{\ge0}$ on $ \alpha_{N|k} $. The size of the terminal set is thus specified by $\tilde{\alpha}_{N|k}$, which is large when the setpoint $(\tilde{x}_{N|k,0},\tilde{u}_{N|k,0})$ is far from the constraints \eqref{eq:Constraints}. \resp{The terminal control law is chosen to be of the form $ \pi^{\pazocal{T}}(x) = K(x-\Tilde{x}_{N|k}) + \Tilde{u}_{N|k}$}. Hence $\tilde{x}_{N|k,0},\tilde{u}_{N|k,0}$ and $\tilde{\alpha}_{N|k}$ are computed offline by solving \resp{the following $T{-}N{+}1$ quadratic programs (for $ k \in \mathbb{N}_{0}^{T-N} $)}
\begin{subequations}\label{eq:OfflineTerminalSet}
\begin{align}
\min_{\substack{\tilde{x}_{N|k,0},\tilde{u}_{N|k,0},\tilde{\alpha}_{N|k}, \\\bar{x}_{N|k,0},\bar{u}_{N|k,0} }}   
|| \tilde{x}_{N|k,0} {-} \bar{x}_{N|k,0}||_2^2 +  &\nonumber \\[-12pt]
 || \tilde{u}_{N|k,0} {-} &\bar{u}_{N|k,0}||_2^2 - \xi \tilde{\alpha}_{N|k}, \nonumber \\[3pt]
\text{s.t.} \:
        \begin{bmatrix}
        A(\bar{\theta}_0)-I & B(\bar{\theta}_0) \\
        C & 0 
        \end{bmatrix} \begin{bmatrix}
        \bar{x}_{N|k,0} \\ \bar{u}_{N|k,0}
        \end{bmatrix}& = \begin{bmatrix}
        0 \\ r_{k+N}
        \end{bmatrix},  \label{eq:OfflineTerminalSet1}\\
        F(\tilde{x}_{N|k,0} {+} \tilde{\alpha}_{N|k} x^j) + G(\tilde{u}_{N|k,0} + \tilde{\alpha}_{N|k}&K  x^j) \le \mathbf{1},\label{eq:OfflineTerminalSet2}\\
        H_x(A(\theta^i)(\tilde{x}_{N|k,0} {+} \tilde{\alpha}_{N|k}x^j) + \quad & \nonumber \\ \resp{B(\theta^i)(\tilde{u}_{N|k,0}+\tilde{\alpha}_{N|k}Kx^j)) }
         {+} &\bar{w} \le \tilde{\alpha}_{N|k}\mathbf{1}, \label{eq:OfflineTerminalSet3}\\ 
        \forall &i \in \mathbb{N}_{1}^{q_\theta}, j\in \mathbb{N}_{1}^{q}, \nonumber
\end{align}
\end{subequations}
where the cost function tries to minimize the distance between the artificial and estimated setpoints, and the constant $\xi>0$ trades off this distance with the size of the terminal set. Note that $\{\theta^i\}_{i=1}^{q_\theta}$ represent the vertices of the set $\Theta_0$. The constraints \eqref{eq:OfflineTerminalSet2} ensure that the terminal set is within $\mathbb{Z}$, and \eqref{eq:OfflineTerminalSet3} imposes the robust invariance condition. The set $\mathbb{X}_{N|k}$ is constrained to lie inside the invariant region by setting:
\begin{equation}\label{eq:OfflineTerminalConstraints}
    z_{N|k} = \tilde{x}_{N|k,0}, \quad \alpha_{N|k} \le \tilde{\alpha}_{N|k}.
\end{equation}
Although the terminal sets designed in \eqref{eq:OfflineTerminalSet} account for the reference trajectory and can be tuned to find the largest terminal sets, such an offline design would be conservative for two reasons. The first reason is that the identification step \eqref{eq:Theta_k_LP} updates the parameter set and can result in a larger terminal region, which is not leveraged in this approach. The second reason is that choosing the size of the terminal set apriori can lead to poor performance. For example, choosing a small $\xi$ results in a small terminal set and reduces the feasible region of MPC.

\subsubsection{Online terminal set design}
The reasons discussed above motivate an online design where $z_{N|k}$ is set equal to an optimization variable $\Tilde{x}_{N|k}\in \mathbb{R}^{n}$, whose distance from $ \bar{x}_{N|k,k} $ can be minimized by the MPC optimizer. Additionally, $\alpha_{N|k}$ is upper bounded by a scalar optimization variable $\tilde{\alpha}_{N|k}\in \mathbb{R}_{\ge0}$, which is constrained such that the terminal set $\mathbb{X}_{N|k}^{\pazocal{T}} := \Tilde{x}_{N|k} \oplus \tilde{\alpha}_{N|k}\mathbb{X}_{0}$ is robustly invariant for the system \eqref{eq:Dynamics} under the terminal control law
\begin{equation}\label{eq:TerminalControl}
    \resp{\pi_{k}^{\pazocal{T}}(x) = K(x-\Tilde{x}_{N|k}) + \Tilde{u}_{N|k}}
\end{equation}
where $\Tilde{u}_{N|k}{\in} \mathbb{R}^{m}$ is an optimization variable. 
Using the notation
\begin{align}\label{eq:x_jNk}
\begin{split}
\tilde{x}_{N|k}^{j} &= \tilde{x}_{N|k} + \tilde{\alpha}_{N|k} x^{j}, \quad \tilde{u}_{N|k}^{j} = K(x_{N|k}^{j} - \tilde{x}_{N|k}) + \Tilde{u}_{N|k},  \\
 D_{N|k}^{j} &= D(\tilde{x}_{N|k}^{j},\tilde{u}_{N|k}^{j}), \quad  d_{N|k}^{j} = A_0 \tilde{x}_{N|k}^{j} + B_0 \tilde{u}_{N|k}^{j} - \tilde{x}_{N|k},\quad 
\end{split}
\end{align}
for $j{\in}\mathbb{N}_1^{q}$, robust invariance of $\mathbb{X}_{N|k}^{\pazocal{T}}$ can be formulated as
\begin{subequations}\label{eq:RPITerminalSet}
\begin{align}
	F\tilde{x}_{N|k} + G\Tilde{u}_{N|k} + \tilde{\alpha}_{N|k}\bar{f} &\le \mathbf{1}, \label{eq:RPI1}\\
	\tilde{\Lambda}_{N|k}^{j} h_{\theta_k} + H_x d_{N|k}^{j} -\tilde{\alpha}_{N|k} \mathbf{1} &\le -\bar{w}, \label{eq:RPI2}\\
	H_x D_{N|k}^{j} &= \tilde{\Lambda}_{N|k}^{j} H_{\theta}\label{eq:RPI3},
\end{align} 
\end{subequations}
where $j\in \mathbb{N}_1^{q}$ and $ \tilde{\Lambda}_{N|k}^{j} \in \mathbb{R}^{n_x\times n_\theta}_{\ge0}$ are Lagrange multiplier variables, similar to \eqref{eq:lambdaConstraints}. The feasibility of the constraints in \eqref{eq:RPITerminalSet} will be addressed in Proposition \ref{Pr:TerminalSet}. The optimization variables defining $\mathbb{X}_{N|k}$ are constrained as
\begin{equation}\label{eq:OnlineTerminalConstraints}
    z_{N|k} = \tilde{x}_{N|k}, \quad \alpha_{N|k} \le \tilde{\alpha}_{N|k}.
\end{equation}
Thus, the online terminal sets guarantee robust invariance for parameters in $\Theta_k$ instead of $\Theta$, and also provide flexibility to increase their size depending on the feasibility of the reference trajectory. In view of these advantages, the subsequent sections formulate the MPC problem using online terminal sets. 

\begin{remark}
Note that a terminal set is used in the MPC approximation of \eqref{eq:FHOCP} for the time steps $k$ where $k{+}N{<}T$ in order to retain feasibility after time step $k{+}N$. When $ k{+}N{\ge}T $, i.e., towards the end of the control horizon, the terminal constraints can be omitted from the MPC optimization problem and the value of $N$ can be appropriately shortened.
\end{remark}

\subsubsection{Design of $\mathbb{X}_0$}
An \resp{important aspect of the MPC algorithm} is the design of $\mathbb{X}_0$, which is used to propagate the set dynamics as well as parameterize the terminal set. Thus, an essential design condition for $\mathbb{X}_0$ is that \eqref{eq:RPITerminalSet} can admit a feasible solution. Using Assumption \ref{As:Contractivity}, the following proposition shows that the feasibility of the terminal constraints can be guaranteed. 
\begin{assumption}\label{As:Contractivity}
The set $\mathbb{X}_0$ \resp{is such that there exists a scalar $\lambda_c\in [0,1)$ satisfying
\begin{subequations}
\begin{align}\label{eq:Contractivity}
   H_x \bigr( A(\theta)x + B(\theta)Kx\bigr) &\le \lambda_c \mathbf{1}, \quad \forall \theta\in \Theta, x \in \mathbb{X}_0,\\
   \lambda_c &\le 1- \bar{w}\bar{\bar{f}} \label{eq:LambdaBound}
\end{align}
\end{subequations}
where $\bar{\bar{f}} = \underset{i}{\text{max}} [\bar{f}]_i, i \in \mathbb{N}_{1}^{n_c} $. }
\end{assumption}
\resp{The conditions in  \eqref{eq:Contractivity} imply that the set $ \mathbb{X}_0 $ is $\lambda$-contractive with contractivity factor $\lambda_c$ for the undisturbed dynamics of the system \eqref{eq:Dynamics} under the control law $u=Kx$. }
\vspace{3pt}
\begin{proposition}\label{Pr:TerminalSet}
    If Assumption \ref{As:Contractivity} is satisfied, then the terminal conditions \eqref{eq:RPITerminalSet} admit a feasible solution.
\end{proposition}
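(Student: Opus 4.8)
The plan is to exhibit an explicit feasible point of \eqref{eq:RPITerminalSet} by centring the terminal set at the origin. Take $\tilde{x}_{N|k}=0$, $\tilde{u}_{N|k}=0$, and $\tilde{\alpha}_{N|k}=\bar{\alpha}$ for a scalar $\bar{\alpha}>0$ to be fixed; with this choice the terminal control law \eqref{eq:TerminalControl} reduces to $u=Kx$ and the terminal set is $\bar{\alpha}\mathbb{X}_0$. Using the definitions in \eqref{eq:x_jNk} together with the identity $D(x,u)\theta=(A(\theta)-A_0)x+(B(\theta)-B_0)u$ (immediate from \eqref{eq:Parameterization} and \eqref{eq:Dtdt}), I would first record that for every vertex $x^j$ of $\mathbb{X}_0$ and every $\theta$,
\[
  H_x\big(D_{N|k}^{j}\theta + d_{N|k}^{j}\big) = \bar{\alpha}\, H_x\big(A(\theta)+B(\theta)K\big)x^j ,
\]
since the $A_0$ and $B_0$ terms cancel. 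As $x^j\in\mathbb{X}_0$ and the identification step renders the sets non-increasing so that $\Theta_k\subseteq\Theta_0=\Theta$, Assumption \ref{As:Contractivity} (specifically \eqref{eq:Contractivity}) yields $H_x(A(\theta)+B(\theta)K)x^j\le\lambda_c\mathbf{1}$ for all $\theta\in\Theta_k$, hence
\[
  \max_{\theta\in\Theta_k} H_x\big(D_{N|k}^{j}\theta + d_{N|k}^{j}\big)\ \le\ \bar{\alpha}\lambda_c\mathbf{1} .
\]

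Next I would dispatch \eqref{eq:RPI3} and \eqref{eq:RPI2} by the same Lagrange-multiplier argument underlying Proposition \ref{Pr:SetDynamics}. For each $j$ and each row $i$, the set $\Theta_k$ is a nonempty (it contains the true parameter $\theta^*$) and bounded polytope, so the linear program $\max\{[H_x D_{N|k}^{j}]_i\theta : H_\theta\theta\le h_{\theta_k}\}$ is feasible and bounded; strong LP duality then provides a dual-optimal $\tilde{\Lambda}_{N|k}^{j}\in\mathbb{R}_{\ge0}^{n_x\times n_\theta}$ with $\tilde{\Lambda}_{N|k}^{j}H_\theta = H_x D_{N|k}^{j}$, which is exactly \eqref{eq:RPI3}, and with $\tilde{\Lambda}_{N|k}^{j}h_{\theta_k} = \max_{\theta\in\Theta_k} H_x D_{N|k}^{j}\theta$ (row-wise). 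For this choice the left-hand side of \eqref{eq:RPI2} equals $\max_{\theta\in\Theta_k}H_x(D_{N|k}^{j}\theta+d_{N|k}^{j}) - \bar{\alpha}\mathbf{1}$, which by the bound above is at most $-\bar{\alpha}(1-\lambda_c)\mathbf{1}$; hence \eqref{eq:RPI2} holds provided $\bar{\alpha}(1-\lambda_c)\mathbf{1}\ge\bar{w}$, i.e. $\bar{\alpha}\ge \bar{\bar{w}}/(1-\lambda_c)$ with $\bar{\bar{w}}:=\max_j[\bar{w}]_j$.

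It remains to satisfy \eqref{eq:RPI1}, which with $\tilde{x}_{N|k}=\tilde{u}_{N|k}=0$ reads $\bar{\alpha}\bar{f}\le\mathbf{1}$; since $0\in\mathbb{X}_0$ forces $\bar{f}\ge0$, this is equivalent to $\bar{\alpha}\le 1/\bar{\bar{f}}$ when $\bar{\bar{f}}>0$ (and imposes nothing otherwise). Combining with the lower bound from the previous step, a valid $\bar{\alpha}$ exists as soon as $\bar{\bar{w}}/(1-\lambda_c)\le 1/\bar{\bar{f}}$, i.e. $\lambda_c\le 1-\bar{\bar{f}}\,\bar{\bar{w}}$, which is guaranteed by condition \eqref{eq:LambdaBound} of Assumption \ref{As:Contractivity} (when $\bar{\bar{f}}=0$ this is vacuous and one simply takes $\bar{\alpha}=\bar{\bar{w}}/(1-\lambda_c)$, finite since $\lambda_c<1$). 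Picking any such $\bar{\alpha}$ together with the multipliers $\tilde{\Lambda}_{N|k}^{j}$ above then gives a feasible point of \eqref{eq:RPITerminalSet}, proving the claim.

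I expect the only delicate point to be the existence of the non-negative multipliers $\tilde{\Lambda}_{N|k}^{j}$ solving the equality \eqref{eq:RPI3}: this is precisely where one needs that $\Theta_k$ inherits nonemptiness and boundedness from $\Theta$ (so the support-function LP is feasible and bounded and strong duality applies), and that the contractivity bound \eqref{eq:Contractivity}, although phrased over $\Theta$, therefore also holds over the online set $\Theta_k$. Everything else is linear bookkeeping in the scaling variable $\bar{\alpha}$.
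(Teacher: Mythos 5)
Your construction is correct and follows essentially the same route as the paper's proof: you centre the terminal set at the origin with $\tilde{x}_{N|k}=\tilde{u}_{N|k}=0$, use the $\lambda$-contractivity of $\mathbb{X}_0$ over $\Theta\supseteq\Theta_k$ at the vertices, obtain the non-negative multipliers $\tilde{\Lambda}_{N|k}^{j}$ from strong LP duality for the support function over $\Theta_k$, and close the argument with the bound \eqref{eq:LambdaBound}, exactly as the paper does with its feasible point $\tilde{x}_{N|k}=\tilde{u}_{N|k}=0$, $\tilde{\alpha}_{N|k}=1/\bar{\bar{f}}$. Your treatment is slightly more explicit about the existence of the dual multipliers and about the degenerate case $\bar{\bar{f}}=0$, but this is a refinement of, not a departure from, the paper's argument.
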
 
\begin{proof}
The contractivity of $\mathbb{X}_{0}$ \eqref{eq:Contractivity} implies
\begin{align}\label{eq:TerminalSetFeas1}
    H_x A_{\text{cl}}(\theta) x^j \le \lambda_c \mathbf{1}, \quad  \theta \in \Theta, j \in \mathbb{N}_1^q .
\end{align}
Using Definition \ref{De:RobustInvariance}, the conditions for robust invariance of $\mathbb{X}_{N|k}^{\pazocal{T}}$ can be written as
\begin{subequations}
\begin{align}
    F x &+ G \pi_{N|k}^{\pazocal{T}}(x) \le \mathbf{1} \label{eq:TerminalSetFeas1a},\\
    H_x(A(\theta)x + B(\theta)\pi_{N|k}^{\pazocal{T}}(x) &+ w - \tilde{x}_{N|k}) \le \tilde{\alpha}_{N|k} \mathbf{1},\label{eq:TerminalSetFeas1b}\\ &\forall x \in \mathbb{X}_{N|k}, \theta \in \Theta, w \in \mathbb{W}.\nonumber
\end{align}
\end{subequations}
Substituting the control law \eqref{eq:TerminalControl} and using \eqref{eq:f_bar}, \eqref{eq:TerminalSetFeas1a} can be rewritten as \eqref{eq:RPI1}. Similarly, \eqref{eq:TerminalSetFeas1b} can be reformulated as
\begin{subequations}
\begin{align}
H_x\bigr(A_{\text{cl}}(\theta)(x &- \tilde{x}_{N|k}) + A(\theta)\tilde{x}_{N|k} + B(\theta)\Tilde{u}_{N|k}  \label{eq:TerminalSetFeas2a}  \\
&\quad -\tilde{x}_{N|k}\bigr) + \bar{w} \le \tilde{\alpha}_{N|k} \mathbf{1}, \nonumber \\
\iff  H_x\bigr(A_{\text{cl}}(\theta)&(\tilde{\alpha}_{N|k} x^j) +A(\theta)\tilde{x}_{N|k} + B(\theta)\Tilde{u}_{N|k}  \label{eq:TerminalSetFeas2b}  \\ 
&-\tilde{x}_{N|k}\bigr) + \bar{w} \le \tilde{\alpha}_{N|k} \mathbf{1} , \quad \forall j\in\mathbb{N}_{1}^{q}.\nonumber
\end{align}
\end{subequations}
Note that \eqref{eq:TerminalSetFeas2b} is equivalent to the pair of equations \eqref{eq:RPI2},\eqref{eq:RPI3}, where the Lagrange multipliers $\tilde{\Lambda}_{N|k}^j$ are used to reformulate a maximization over $\theta$ to ensure robustness. The equivalence holds due to strong duality. Using \eqref{eq:TerminalSetFeas1}, the condition \eqref{eq:TerminalSetFeas2b} is always satisfied if
\begin{align}\label{eq:TerminalSetFeas3}
    \bar{w} +  H_x\bigr(A(\theta)\tilde{x}_{N|k} + B(\theta)\Tilde{u}_{N|k}  -\tilde{x}_{N|k}\bigr)\le  \tilde{\alpha}_{N|k} (\mathbf{1} - \lambda_c).
\end{align}
It can be seen that the conditions \eqref{eq:RPI1} and  \eqref{eq:TerminalSetFeas3} admit  $\tilde{x}_{N|k} = \tilde{u}_{N|k} = 0$, and $ \tilde{\alpha}_{N|k} = 1/\bar{\bar{f}}$ as a feasible solution.
\end{proof}
Although the above proof provides a feasible terminal set with center at the origin, the parameter $\lambda_c$ is a design variable which can be tuned to allow other feasible solutions. The offline design procedure to be followed is described in Algorithm 1. Given the system dynamics, a control gain $K$ is first designed. Then, a $ \lambda$-contractive set $\mathbb{X}_0$ with a chosen $\lambda_c$ is constructed using the algorithm given in \cite[\resp{p.~3}]{didier2021}. 
An important point to consider is that the design of $\mathbb{X}_0$ affects the control performance through the choice of $\lambda_c$, and the number of vertices $q$ and hyperplanes $n_x$ in $\mathbb{X}_0$ affect the computational complexity 
as seen  in \eqref{eq:lambdaConstraints}. 

\setlength{\textfloatsep}{2pt}
\begin{algorithm}[h]
	\caption{Offline design procedure}\label{Alg:Offline} 
	\begin{algorithmic}[1]
	    \Statex \textbf{Input}: $\Theta$, \textbf{Output}: $K,\mathbb{X}_0$
        \State Compute robustly stabilizing gain $K$ (e.g., see \cite{kothare1996})
		\State Choose a contractivity rate $\lambda_c \in [0,1)$
		\State Compute $\mathbb{X}_0$ as suggested in \cite[\resp{p.~3}]{didier2021}
		\State Compute the constants $\bar{w},\bar{f},\bar{\bar{f}}$
		\If {$\lambda_c \le 1- \bar{w}\bar{\bar{f}}$} \label{Alg:lambdaCondition}
            \State STOP
        \Else
            \State Return to step 1, redesign $K,\mathbb{X}_0$
        \EndIf
	\end{algorithmic}
\end{algorithm}
\begin{remark}
Note that the design of polytopic contractive sets is a difficult problem, which has been extensively studied for linear systems  \cite{kolmanovsky1998theory}, \resp{\cite{blanchini2008set}, \cite{rubin2018computation}}. \resp{The upper bound \eqref{eq:LambdaBound} in Assumption \ref{As:Contractivity} implies that the condition in step \ref{Alg:lambdaCondition} of Algorithm \ref{Alg:Offline}  is satisfied. If this condition} is not satisfied, one must redesign $K$ and choose a new $\lambda_c$. Even though there is no systematic way to ensure an improved design at the next iteration, one heuristic that worked in practice was to choose smaller $\lambda_c$, and impose $\lambda$-contractivity on the ellipsoidal region stabilized by $K$ \cite{kothare1996}. 
\end{remark}

\section{Dual control using constraint predictions}\label{Sec:Exploration}
The robust state tube  defined in Section \ref{Sec:Tracking} ensures that the constraints are satisfied for any parameter in $\Theta_k$. This section models the effect of the control inputs on the identification scheme to enable dual control in closed loop. For this purpose, we define predicted variables and sets which are analogous to the ones defined in Section \ref{Sec:Tracking}. These predicted quantities are denoted with a hat ($ \hat{~} $). 

\subsection{Predicted parameter set} \label{Sec:PredParSet}
\resp{The parameter identification step \eqref{eq:Theta_k_update} connects the control inputs to the parameter sets in closed loop. This link can be leveraged within the MPC problem by predicting the future state measurements  as $ \hat{x}_{l|k} $ according to 

\begin{align}\label{eq:xhat}
\begin{split}
\hat{x}_{l+1|k} &= A(\bar{\theta}_k) \hat{x}_{l|k} + B(\bar{\theta}_k) \hat{u}_{l|k} ,\\
\hat{u}_{l|k} &= K (\hat{x}_{l|k} - \bar{x}_{l|k,k}) + v_{l|k}, \quad \forall l\in\mathbb{N}_{0}^{N_{\theta} -1}  \\
\hat{x}_{0|k} &= x_k,
\end{split}
\end{align}
where $ \bar{\theta}_k $ is the parameter estimated by \eqref{eq:ChebyshevMax}, and $ N_{\theta} \in \mathbb{N}_{2}^{N} $ is the \emph{lookahead} horizon. Using these predicted measurements, the predicted non-falsified sets $\{\hat{\Delta}_{l|k}\}_{l=1}^{N_{\theta}}$ are defined as 
\begin{align}\label{eq:Delta_hat}
\hat{\Delta}_{ l|k} &:= \{\theta \: | \hat{x}_{t+1|k} {-} A(\theta) \hat{x}_{t|k} {-} B(\theta) \hat{u}_{t|k} \in \mathbb{W},  \forall t \in \mathbb{N}_{0}^{l -1} \}. 
\end{align}
The hyperplanes defining the sets $ \hat{\Delta}_{ l|k} $ depend affinely on the control input variables $ v_{l|k} $ as seen in \eqref{eq:xhat}. This dependence is omitted for clarity of presentation. 
A sequence of predicted parameter sets $ \{\hat{\Theta}_{l|k}\}_{l=1}^{N_{\theta}} \subseteq \Theta_k $ can now be defined as
\begin{align}\label{eq:PredictedParameterSet}
\hat{\Theta}_{l|k} := \Theta_k \cap \hat{\Delta}_{l|k} 
			    &= \: \{\theta \in \mathbb{R}^{p} | \hat{H}_{\theta_{l|k}} \theta \le  \hat{h}_{\theta_{l|k}} \}.
\end{align}
An illustration of the parameter estimate, the predicted constraints and parameter set is shown in Fig \ref{fig:parameterSetExample}. 

\begin{remark}
	Note that the lookahead horizon $ N_{\theta} $ must have a minimum value of 2. This is because, when $ N_{\theta} =1 $ the predicted non-falsified set $ \hat{\Delta}_{ N_{\theta}|k} $ is described by the first control input $ u_k $ and the known state measurement $ x_k $. This means that, if some uncertain parameters only affect the matrix $A(\theta)$, the predicted constraints on these parameters cannot be characterized as a function of the MPC optimization variables. As a result, there would be no dual action of the control input for these parameters. Additionally, it can be seen that as $ N_{\theta} $ increases, the length of the input sequence affecting the parameter estimation increases, allowing for improved identification. However, this also results in higher computational costs due to the non-convex constraints in the optimization problem, as seen in Section \ref{Sec:PredictedTube}. 
\end{remark}
}

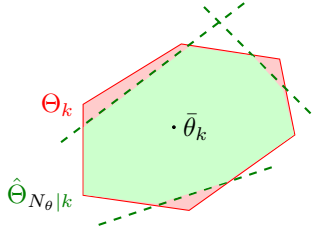
\begin{figure}
	\centering
		\centering
		\begin{tikzpicture}
		
		\draw [red,thick](0,0) -- (1.4,-0.2) -- (2.8,0.8) -- (2.6,1.8) -- (1.3,2) -- (0,1.2) -- cycle;
		\node[anchor=east,red] at (0,1.2){$ \Theta_k $};
		\fill [red!20!white](0,0) -- (1.4,-0.2) -- (2.8,0.8) -- (2.6,1.8) -- (1.3,2) -- (0,1.2) -- cycle;
		
		\draw [dashed,green!50!black,thick](-0.3,0.7) -- (2.2,2.6);
		\draw [dashed,green!50!black,thick](1.6,2.5) -- (3.1,1);
		\draw [dashed,green!50!black,thick](0.2,-0.4) -- (2.6,0.4);
		\node[anchor=east,green!50!black] at (0,0){$ \hat{\Theta}_{\resp{N_{\theta}}|k} $};
		
		\fill[green!20!white] (0,0) -- (0,0.9)-- (1.4,1.97) -- (2.25,1.85) -- (2.65,1.4) -- (2.8,0.8) -- (1.9,0.17) -- (0.97,-0.12) --cycle;
		
		\coordinate (x0) at (1.2,0.9);
		\filldraw (x0) circle (0.5pt) node[anchor=west] {$ \bar{\theta}_k $};
		\end{tikzpicture}
		\caption{ The parameter set $ \Theta_k $ is bounded by the constraints shown in red, and the parameter estimate $ \bar{\theta}_k $ lies inside this set. The predicted bounds are shown as dashed green lines, which depend on the control input \resp{variables $ v_{l|k} $}. The green shaded region shows the resulting predicted parameter set $ \hat{\Theta}_{\resp{N_{\theta}}|k} $.}
		\label{fig:parameterSetExample}
\end{figure}
\subsection{Predicted state tube}\label{Sec:PredictedTube}
In order to capture the effect of $u_k$ on the performance via the cost, a predicted state tube is constructed such that \resp{
\begin{align}\label{eq:PredictedStateTube}
\begin{split}
&\hat{\mathbb{X}}_{0|k} \ni \{x_{k}\},  \\
&\hat{\mathbb{X}}_{l+1|k} \supseteq A(\theta)\hat{\mathbb{X}}_{l|k}  \oplus B(\theta)u_{l|k} \oplus \mathbb{W}, \: \forall  \theta \in \hat{\Theta}_{l|k}, l \in \mathbb{N}_{0}^{N_{\theta}-1} \\
&\hat{\mathbb{X}}_{l+1|k} \supseteq A(\theta)\hat{\mathbb{X}}_{l|k}  \oplus B(\theta)u_{l|k} \oplus \mathbb{W}, \: \forall  \theta \in \hat{\Theta}_{N_{\theta}|k},l \in \mathbb{N}_{N_{\theta}}^{N-1}.
\end{split}
\end{align}
}
Note that the same control law \eqref{eq:InputParameterization} is used to propagate the robust and predicted state tubes. However, the parameter sets in \eqref{eq:SetDynamics} and \eqref{eq:PredictedStateTube} are different. The sets $\hat{\mathbb{X}}_{l|k} =\{\hat{z}_{l|k}\} \oplus \hat{\alpha}_{l|k}\mathbb{X}_0$ are parameterized using  $ \hat{z}_{l|k} \in \mathbb{R}^n, \hat{\alpha}_{l|k}\in\mathbb{R}_{\ge0}$ for all $l\in\mathbb{N}_{0}^{N} $. The following notation is now defined for $  j \in \mathbb{N}_{1}^{q} ,l \in \mathbb{N}_{0}^{N-1} $
\setlength\arraycolsep{0pt}
\begin{equation}\label{eq:x_til_jlk}
\begin{array}{rll}
\hat{x}_{l|k}^{j} &= \hat{z}_{l|k} + \hat{\alpha}_{l|k} x^{j}, \quad &\hat{d}_{l|k}^{j} = A_0 x_{l|k}^{j} + B_0 u_{l|k}^{j} - \hat{z}_{l+1|k},  \\
\hat{D}_{l|k}^{j} &= D(\hat{x}_{l|k}^{j},\hat{u}_{l|k}^{j}), \quad & \hat{u}_{l|k}^{j} = K(\hat{x}_{l|k}^{j} \resp{-\bar{x}_{l|k,k}}) + v_{l|k}.
\end{array}
\end{equation}
The next proposition formulates the dynamics of the predicted state tube as affine constraints.
\begin{proposition}\label{Pr:PredictedSetDynamics}
The predicted state tube $ \{\hat{\mathbb{X}}_{l|k}\}_{l=0}^{N}$ satisfies the set-dynamics \eqref{eq:PredictedStateTube} if and only if, for all $ j\in \mathbb{N}_{1}^{q} $ and $ l\in \mathbb{N}_{0}^{N-1}$ there exists $ \hat{\Lambda}_{l|k}^{j} \in \mathbb{R}^{n_x\times (n_\theta+n_w)}_{\ge0}$ such that 
\begin{subequations}\label{eq:lambda_tilConstraints}
\begin{align}
-H_x \hat{z}_{0|k} - \hat{\alpha}_{0|k}\mathbf{1} &\le -H_x x_k, \\
\hat{\Lambda}_{l|k}^{j} \hat{h}_{\theta_{\resp{l}|k}} + H_x \hat{d}_{l|k}^{j} -\hat{\alpha}_{l+1|k} \mathbf{1} &\le -\bar{w},\label{eq:lambda_tilConstraints2}\\
H_x \hat{D}_{l|k}^{j} &= \hat{\Lambda}_{l|k}^{j} \hat{H}_{\theta_{l|k}}.\label{eq:lambda_tilConstraints3}
\end{align}
\end{subequations}  
\end{proposition}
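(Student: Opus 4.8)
The plan is to follow the same three-step argument as in Proposition~\ref{Pr:SetDynamics} (cf.~\cite[Proposition~9]{lorenzen2017adaptive}), the only new feature being that the parameter set appearing in the $l$-th inclusion varies along the horizon --- it is $\hat{\Theta}_{l|k}$ for $l<N_{\theta}$ and $\hat{\Theta}_{N_{\theta}|k}$ for $l\ge N_{\theta}$, i.e.\ the polytope $\{\theta\mid \hat{H}_{\theta_{l|k}}\theta\le\hat{h}_{\theta_{l|k}}\}$ with the convention $\hat{H}_{\theta_{l|k}}=\hat{H}_{\theta_{N_{\theta}|k}}$ when $l\ge N_{\theta}$. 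I would fix the values of all decision variables (so that $\hat{D}_{l|k}^{j}$, $\hat{d}_{l|k}^{j}$, $\hat{H}_{\theta_{l|k}}$, $\hat{h}_{\theta_{l|k}}$ become constants), use that the parameterization gives $\hat{\mathbb{X}}_{l|k}=\{\hat{z}_{l|k}\}\oplus\hat{\alpha}_{l|k}\mathbb{X}_0=\text{co}\{\hat{x}_{l|k}^{1},\dots,\hat{x}_{l|k}^{q}\}$ with $\hat{\alpha}_{l|k}\ge0$, and recall $\mathbb{X}_0=\{x\mid H_xx\le\mathbf{1}\}$.

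First I would dispatch the initial condition: $x_k\in\hat{\mathbb{X}}_{0|k}$ is, by the halfspace description of $\hat{\mathbb{X}}_{0|k}$, exactly $H_x(x_k-\hat{z}_{0|k})\le\hat{\alpha}_{0|k}\mathbf{1}$, which is the first inequality in \eqref{eq:lambda_tilConstraints}. For each $l$ I would then reduce the set inclusion to finitely many scalar inequalities. Because the control law \eqref{eq:InputParameterization} is affine in the state, for a fixed $\theta$ the closed-loop map $x\mapsto A(\theta)x+B(\theta)u_{l|k}(x)$ is affine and maps the polytope $\hat{\mathbb{X}}_{l|k}$ onto $\text{co}\{A(\theta)\hat{x}_{l|k}^{j}+B(\theta)\hat{u}_{l|k}^{j}\}_{j=1}^{q}$. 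A polytope Minkowski-summed with $\mathbb{W}$ is contained in $\hat{\mathbb{X}}_{l+1|k}=\{x\mid H_x(x-\hat{z}_{l+1|k})\le\hat{\alpha}_{l+1|k}\mathbf{1}\}$ if and only if every vertex satisfies the same inequality with the right-hand side tightened by $\bar{w}$ (using the componentwise identity $\max_{w\in\mathbb{W}}H_xw=\bar{w}$ from \eqref{eq:f_bar}); and, the resulting expression being affine in $x$, its maximum over $\hat{\mathbb{X}}_{l|k}$ is attained at a vertex $\hat{x}_{l|k}^{j}$. Hence \eqref{eq:PredictedStateTube} is equivalent to requiring, for every $j\in\mathbb{N}_1^{q}$, $l\in\mathbb{N}_0^{N-1}$ and every $\theta$ in the relevant parameter polytope,
\[
H_x\bigl(A(\theta)\hat{x}_{l|k}^{j}+B(\theta)\hat{u}_{l|k}^{j}-\hat{z}_{l+1|k}\bigr)+\bar{w}\le\hat{\alpha}_{l+1|k}\mathbf{1}.
\]
By the parameterization \eqref{eq:Parameterization} and the definitions \eqref{eq:Dtdt},\eqref{eq:x_til_jlk} one has $A(\theta)\hat{x}_{l|k}^{j}+B(\theta)\hat{u}_{l|k}^{j}=A_0\hat{x}_{l|k}^{j}+B_0\hat{u}_{l|k}^{j}+\hat{D}_{l|k}^{j}\theta$, so the display becomes the robust linear inequality $H_x\hat{D}_{l|k}^{j}\theta\le\hat{\alpha}_{l+1|k}\mathbf{1}-H_x\hat{d}_{l|k}^{j}-\bar{w}$, required to hold for all $\theta$ with $\hat{H}_{\theta_{l|k}}\theta\le\hat{h}_{\theta_{l|k}}$.

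Finally I would eliminate the universal quantifier over $\theta$ row by row using linear-programming duality: for the $r$-th row, $\max\{[H_x\hat{D}_{l|k}^{j}]_r\theta:\hat{H}_{\theta_{l|k}}\theta\le\hat{h}_{\theta_{l|k}}\}\le[\hat{\alpha}_{l+1|k}\mathbf{1}-H_x\hat{d}_{l|k}^{j}-\bar{w}]_r$ holds if and only if there is a nonnegative vector $\mu_r$ with $\mu_r\tr\hat{H}_{\theta_{l|k}}=[H_x\hat{D}_{l|k}^{j}]_r$ and $\mu_r\tr\hat{h}_{\theta_{l|k}}\le[\hat{\alpha}_{l+1|k}\mathbf{1}-H_x\hat{d}_{l|k}^{j}-\bar{w}]_r$; stacking the $\mu_r$ as the rows of the nonnegative matrix $\hat{\Lambda}_{l|k}^{j}$ recovers exactly \eqref{eq:lambda_tilConstraints3} and \eqref{eq:lambda_tilConstraints2}. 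The ``$\Leftarrow$'' direction is weak duality and is immediate; the ``$\Rightarrow$'' direction requires strong LP duality, which holds because the parameter polytope is nonempty --- it contains $\bar{\theta}_k$, since $\bar{\theta}_k\in\Theta_k$ and, by the noise-free prediction \eqref{eq:xhat}, $\hat{x}_{t+1|k}-A(\bar{\theta}_k)\hat{x}_{t|k}-B(\bar{\theta}_k)\hat{u}_{t|k}=0\in\mathbb{W}$ so that $\bar{\theta}_k\in\hat{\Delta}_{l|k}$ --- and bounded, being a subset of $\Theta$. The case $l\ge N_{\theta}$ is identical with $\hat{\Theta}_{N_{\theta}|k}$ replacing $\hat{\Theta}_{l|k}$. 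I expect this duality step to be the main obstacle: the equivalence between the semi-infinite robust inequality and a \emph{finite} multiplier certificate hinges on nonemptiness (and boundedness) of each of these parameter polytopes, which I would justify by the $\bar{\theta}_k$ argument above (implicitly using $0\in\mathbb{W}$), whereas the vertex reduction and the affine expansion in $\theta$ are routine and mirror Proposition~\ref{Pr:SetDynamics}.
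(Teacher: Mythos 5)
Your proposal is correct and follows exactly the argument the paper relies on: the paper states this proposition without a separate proof, treating it as the direct analogue of Proposition~\ref{Pr:SetDynamics} (\cite[Proposition~9]{lorenzen2017adaptive}) with $\Theta_k$ replaced by the predicted parameter sets $\hat{\Theta}_{l|k}$, and your vertex reduction, worst-case disturbance tightening via $\bar{w}$, affine expansion in $\theta$ through $\hat{D}_{l|k}^{j}$, and row-wise LP duality reproduce that standard tube-inclusion argument. Your explicit justification of strong duality via nonemptiness ($\bar{\theta}_k\in\hat{\Theta}_{l|k}$, using $0\in\mathbb{W}$) and boundedness of the parameter polytopes is a detail the paper leaves implicit, and it is handled correctly.
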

The constraints \eqref{eq:lambda_tilConstraints2} and \eqref{eq:lambda_tilConstraints3} are bilinear in the optimization variables, because \resp{the hyperplanes defining $ \hat{\Delta}_{l|k} $} are affinely dependent on the control input variables $ v_{l|k} $, as seen in \eqref{eq:xhat},\eqref{eq:Delta_hat}. An illustration of the robust and predicted state tubes is shown in Fig. \ref{Fig:StateTubes}. It can be seen that predicted state tube always lies within the robust state tube  $(\hat{\mathbb{X}}_{l|k}\subseteq\mathbb{X}_{l|k})$, because $\hat{\Theta}_{\resp{l}|k} \subseteq \Theta_k $ according to \eqref{eq:PredictedParameterSet}.

\begin{figure}
	\centering
	\begin{tikzpicture}[scale=0.65]
	\draw[blue,very thick](0,5.2) -- (12,5.2);
	
	\draw[red,thick](1.1,1) -- (1.4,1.8) -- (1.6,2.6) -- (1.8,3) -- (2,3.3) -- (2.2,3.5) -- (2.4,3.6) -- (2.6,3.7) -- (2.8,4) -- (3,4.2) -- (3.2,4.4) -- (3.4,4.6) -- (3.6,4.8) -- (3.8,4.9) -- (4,5) -- (4.2,5.1) -- (4.4,5.15) -- (4.6,5.17) -- (4.8,5.15) -- (5,5.17);
	\draw[red,thick](1.1,1) -- (1.4,1) -- (1.6,0.97) -- (1.8,0.95) -- (2,0.925) -- (2.2,0.9) -- (2.4,0.95) -- (2.6,1.1) -- (2.8,1.22) -- (3,1.42) -- (3.2,1.6) -- (3.4,1.8) -- (3.6,1.95) -- (3.8,2.15) -- (4,2.2) -- (4.2,2.3) -- (4.4,2.35) -- (4.6,2.4) -- (4.8,2.45) -- (5,2.5);
	
	\fill[red!20!white](1.1,1) -- (1.4,1.8) -- (1.6,2.6) -- (1.8,3) -- (2,3.3) -- (2.2,3.5) -- (2.4,3.6) -- (2.6,3.7) -- (2.8,4) -- (3,4.2) -- (3.2,4.4) -- (3.4,4.6) -- (3.6,4.8) -- (3.8,4.9) -- (4,5) -- (4.2,5.1) -- (4.4,5.15) -- (4.6,5.17) -- (4.8,5.15) -- (5,5.17) -- (5,2.5) -- (4.8,2.45) -- (4.6,2.4) -- (4.4,2.35) -- (4.2,2.3) -- (4,2.2) -- (3.8,2.15) -- (3.6,1.95) -- (3.4,1.8) -- (3.2,1.6) -- (3,1.42) -- (2.8,1.22) -- (2.6,1.1) -- (2.4,0.95) -- (2.2,0.9) -- (2,0.925) -- (1.8,0.95) -- (1.6,0.97) -- (1.4,1) -- cycle;

	\draw[green!50!black,thick,dashed](1.1,1) -- (1.4,1.4) -- (1.6,1.9) -- (1.8,2.2) -- (2,2.55) -- (2.2,2.75) -- (2.4,3) -- (2.6,3.2) -- (2.8,3.42) -- (3,3.6) -- (3.2,3.75) -- (3.4,3.8) -- (3.6,3.97) -- (3.8,4.1) -- (4,4.3) -- (4.2,4.4) -- (4.4,4.57) -- (4.6,4.7) -- (4.8,4.75) -- (5,4.8);
	
	\draw[green!50!black,thick,dotted](5,4.8) -- (12,4.3);
	\draw[green!50!black,thick,dotted](5,3.15) -- (12,3.7);
	
	\draw[green!50!black,thick,dashed](1.1,1) -- (1.4,1.15) -- (1.6,1.25) -- (1.8,1.37) -- (2,1.45) -- (2.2,1.52) -- (2.4,1.62) -- (2.6,1.8) -- (2.8,1.95) -- (3,2.15) -- (3.2,2.4) -- (3.4,2.52) -- (3.6,2.65) -- (3.8,2.8) -- (4,2.87) -- (4.2,2.95) -- (4.4,3) -- (4.6,3.05) -- (4.8,3.1) -- (5,3.15);

	\fill[green!20!white](1.1,1) -- (1.4,1.4) -- (1.6,1.9) -- (1.8,2.2) -- (2,2.55) -- (2.2,2.75) -- (2.4,3) -- (2.6,3.2) -- (2.8,3.42) -- (3,3.6) -- (3.2,3.75) -- (3.4,3.8) -- (3.6,3.97) -- (3.8,4.1) -- (4,4.3) -- (4.2,4.4) -- (4.4,4.57) -- (4.6,4.7) -- (4.8,4.75) -- (5,4.8) -- (5,3.15) -- (4.8,3.1) -- (4.6,3.05) -- (4.4,3) -- (4.2,2.95) -- (4,2.87) -- (3.8,2.8) -- (3.6,2.65) -- (3.4,2.52) -- (3.2,2.4) -- (3,2.15) -- (2.8,1.95) -- (2.6,1.8) -- (2.4,1.62) -- (2.2,1.52) -- (2,1.45) -- (1.8,1.37) -- (1.6,1.25) -- (1.4,1.15) -- cycle;
	
	\draw[black,dashed,thick](0,1.2) -- (2,1.2) -- (4,3.6) -- (6.6,3.6) -- (7.6,2) -- (12,2);
	\draw[black,thick](0,1) -- (1.1,1);
	\draw[blue,densely dotted](5,5.2) -- (5,2);
	
	\draw[->] [black,thin] (-0.75,0.5)  -- (12.5,0.5);
	\node[anchor=north,black] at (9,0.5){time};
	\draw[->] [black,thin] (-0.25,0)  -- (-0.25,5.5);
	\node[anchor=east,black,rotate=90] at (-0.5,4){state};
	
	\draw[black,densely  dotted](0,1) -- (0,0.5);
	\node[anchor=north,black] at (0,0.5){\small$0$};
	\draw[black,densely  dotted](1.1,1) -- (1.1,0.5);
	\node[anchor=north,black] at (1.1,0.5){\small$k$};
	\draw[black,densely  dotted](5,2) -- (5,0.5);
	\node[anchor=north,black] at (5,0.5){\small$k{+}N$};
	\draw[black,densely  dotted](12,2) -- (12,0.5);
	\node[anchor=north,black] at (12,0.5){\small$T$};
	
	\node[anchor=east,red] at (3,4.5){$\mathbb{X}_{l|k}$};
	\node[anchor=west,green!50!black] at (5,4.2){$\hat{\mathbb{X}}_{l|k}$};
	
	\end{tikzpicture}
	\caption{Evolution of the robust state tube $\mathbb{X}_{l|k}$ (red, solid) and the predicted state tube $\hat{\mathbb{X}}_{l|k}$ (green, dashed). The reference trajectory (black, dashed) and the state constraint (blue, solid) are also shown. The cost function is predicted using $\mathbb{\hat{X}}_{l|k}$, and cost-to-go is estimated using $\lambda$-contractivity (green, dotted).}
	\label{Fig:StateTubes}
\end{figure}
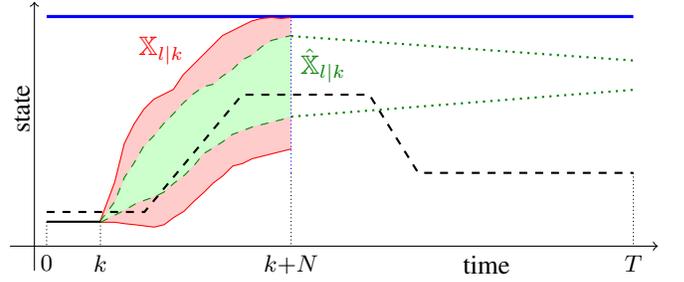
\subsection{Predicted cost function}\label{Sec:PredictedCost}
The predicted state tube captures the effect of the control input on the resulting uncertainty as well as the dynamics. However, its dependence on the parameter estimate $\bar{\theta}_k$ means that it cannot guarantee the trajectories to lie within the constraint set. A popular strategy used in safe-learning literature is to decouple the robustness and learning \cite{aswani2013provably}. This approach is used here, whereby the constraint satisfaction is ensured by the robust state tube, and the predicted state tube is used to define the AMPC cost function as the sum of stage costs and a terminal cost,
\begin{align}\label{eq:MPCcost}
\begin{split}
\Omega\bigr(\{\hat{\mathbb{X}}_{l|k}, \bar{x}_{l|k,k},\bar{u}_{l|k,k} \}_{l=0}^{N}, &\{v_{l|k}\}_{l=0}^{N-1},\tilde{u}_{N|k}  \bigr) := \\ 
\sum_{l=0}^{N-1} J(\hat{\mathbb{X}}_{l|k}&, \bar{x}_{l|k,k},\bar{u}_{l|k,k} ,v_{l|k}) + \\ 
J_\pazocal{T}(\hat{\mathbb{X}}_{N|k}&,\bar{x}_{N|k,k},\bar{u}_{N|k,k},\tilde{u}_{N|k})  .
\end{split}
\end{align}
The stage cost $J(\hat{\mathbb{X}}_{l|k}, \bar{x}_{l|k,k},\bar{u}_{l|k,k} ,v_{l|k})$ is defined as
\begin{align}\label{eq:MPC_stageCost}
    J(\hat{\mathbb{X}}_{l|k}, &\bar{x}_{l|k,k},\bar{u}_{l|k,k} ,v_{l|k}) = \max_{x \in \hat{\mathbb{X}}_{l|k}} ||Q(Cx - r_{k+l})||_{\infty} \\
    &+ ||R(K(x - \bar{x}_{l|k,k}) + v_{l|k} - \bar{u}_{l|k,k})||_{\infty},\nonumber
\end{align}
where the maximization over the predicted state tube is used to approximate the worst-case performance due to the uncertainty and disturbance. Using an epigraph reformulation, \eqref{eq:MPC_stageCost} can be described as a linear cost function \cite{boyd2004convex}. The terminal cost function $J_\pazocal{T}(\hat{\mathbb{X}}_{N|k},\bar{x}_{N|k,k},\bar{u}_{N|k,k},\tilde{u}_{N|k})$ must capture the cost-to-go from time step $k+N$ to $T$. This is approximated using the $\lambda$-contractivity of the terminal set as
\begin{align}\label{eq:MPC_terminalCost} 
\begin{split}
    J_\pazocal{T}(\hat{\mathbb{X}}_{N|k}&,\bar{x}_{N|k,k},\bar{u}_{N|k,k},\tilde{u}_{N|k}) = \\
     &\max_{x \in \hat{\mathbb{X}}_{N|k}} \beta(k+N)  \Bigr(||Q(Cx - r_{k+N})||_{\infty} \\
    &+ ||R(K(x {-} \bar{x}_{N|k,k}) + \tilde{u}_{N|k} - \bar{u}_{N|k,k})||_{\infty}\Bigr),
\end{split}
\end{align}
where $\beta(k) = (1-\lambda_c^{T-k})/(1-\lambda_c)$ is a time dependent factor. The cost-to-go is approximated using $\beta(k)$ under the assumption that the reference trajectory does not change and that the system is undisturbed in the time horizon $ [k+N,T] $. Thus, the cost function defined in \eqref{eq:MPCcost} estimates the  performance benefits of exploration within the prediction horizon in the stage cost \eqref{eq:MPC_stageCost}, and approximates the benefits after the prediction horizon in the terminal cost \eqref{eq:MPC_terminalCost} by using the contractivity and remaining length of the control horizon. \resp{Moreover, the cost function \eqref{eq:MPCcost} enables application-oriented dual control, since the optimizer can trade-off exploration with exploitation based on the estimated improvement of a robust performance metric. }


\begin{remark}	
The optimal control problem \eqref{eq:FHOCP} uses an infinity norm in the cost function, which enables to describe the MPC worst-case cost \eqref{eq:MPCcost} as a linear function using epigraph reformulation. Instead, if a more standard quadratic cost  is considered in \eqref{eq:FHOCP} and in MPC, the maximization of a quadratic function over the predicted state tube sets can be described using second order cone constraints. 
\end{remark}


\section{Dual adaptive MPC}\label{Sec:DAMPC}
\resp{In this section, we formulate the dual adaptive MPC algorithm using the robust and predicted state tubes described in Sections \ref{Sec:Tracking} and \ref{Sec:Exploration}. An overview of the different parts of the algorithm is shown in Figure \ref{fig:FlowChart}. The two main contributions of the paper can also be seen in the figure, as (i) improvements to tracking MPC using the blocks from Section \ref{Sec:Tracking} and (ii) the proposition of a novel application-oriented dual control scheme using the blocks from Section \ref{Sec:Exploration}. }

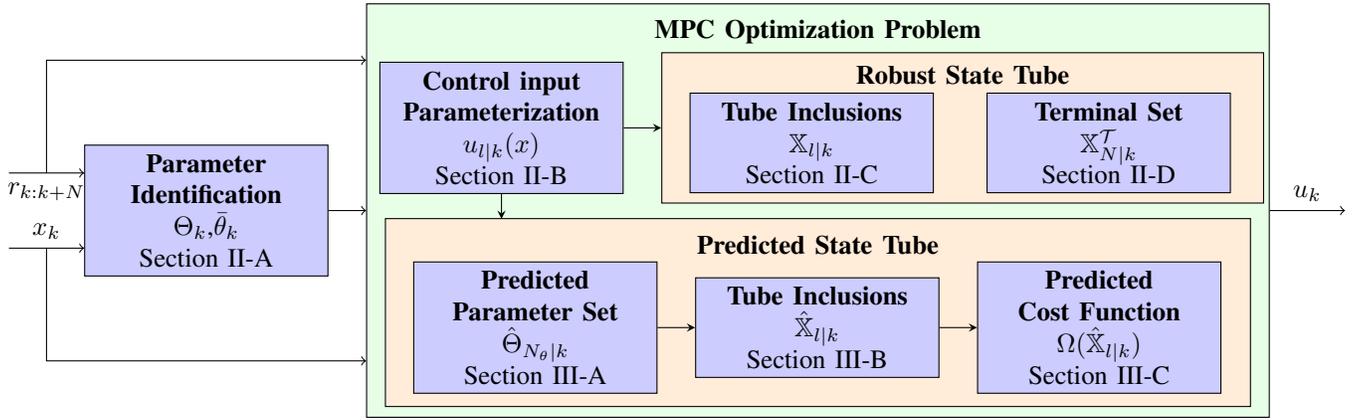
\begin{figure*}
	
	
	\begin{tikzpicture}
	
	\node [coordinate] (input) at (current page.center) {};		
	\node(placing) at ($ (input) + (0,0) $){};
	\node [rectangle,draw , fill=blue!20, right = 1cm of input, minimum width = 2cm, minimum height = 1cm ] (updPar)
	{ 
		\begin{minipage}{3cm}
		\centering
		\textbf{Parameter \\ Identification} \\
		$\Theta_k$,$\bar{\theta}_k$ \\
		Section \ref{Sec:ParameterIdentification}
		\end{minipage}
	};
	
	\node [rectangle,draw , fill=green!10,right = 0.5cm of updPar, minimum width = 12cm, minimum height = 5.5cm  ] (OptProb) {};
	\node[below=0.1cm] at (OptProb.north) {\textbf{MPC Optimization Problem}};
	
	\coordinate (x1) at ($(OptProb.west) + (1.8,1.1cm) $){}; 
	
	\node [rectangle,draw , fill=blue!20, minimum width = 3cm, minimum height = 1.5cm  ]  at (x1)  (SptEst) { 
		\begin{minipage}{3cm}
		\centering
		\textbf{Control input \\ Parameterization} \\
		$ u_{l|k} (x) $ \\
		Section \ref{Sec:ControlParameterization} 
		\end{minipage}
	};

	\coordinate (A1) at ($(updPar.west)+(-1,0.5)$){};
	\coordinate (A2) at ($(A1)+(0,-1)$){};
	
	\draw [->] (A1) -- node[anchor=north] {$r_{k:k+N}$} ($(updPar.west)+(0,0.5)$);
	\draw [->] (A2) -- node[anchor=south] {$x_k$} ($(updPar.west)+(0,-0.5)$);
	
	\coordinate (A3) at ($(OptProb.west)+(0,2)$){};
	\coordinate (A4) at ($(OptProb.west)+(0,-2)$){};
	\coordinate (A5) at ($(A1) + (0.5,0) $){};
	\coordinate (A6) at ($(A2) + (0.5,0) $){};
	\coordinate (A7) at ($(OptProb.east) + (1,0) $){};
	
	\draw [->] (A5) |-  (A3);
	\draw [->] (A6) |-  (A4);
	
	\draw [->] (updPar.east) --  (OptProb.west);
	
	\draw [->] (OptProb.east) -- node[anchor=south] {$u_k$}  (A7);

	\node [rectangle,draw , fill=orange!15,right = 0.5cm of SptEst.east, minimum width = 8cm, minimum height = 2cm  ] (RST) { 
	};
	\node [rectangle, below = 0.05cm of RST.north, minimum width = 5cm, minimum height = 0.5cm ] (RST_title) {
		\begin{minipage}{7cm}
		\centering
		\textbf{Robust State Tube} 
		\end{minipage}	
	};

	\draw [->,>=stealth] (SptEst.east) -- (RST.west);
	\draw [->,>=stealth] (SptEst.south) -- ($(SptEst.south) + (0,-0.33cm) $);

	\node [rectangle,draw , fill=blue!20, minimum width = 3cm, minimum height = 1cm  ] at  ($(RST.west) + (2,-0.2cm) $) (RST_TuInc) { 
		\begin{minipage}{3cm}
		\centering
		\textbf{Tube Inclusions} \\
		$ \mathbb{X}_{l|k} $ \\
		Section \ref{Sec:RobustTube}
		\end{minipage}
	};
	
	\node [rectangle,draw , fill=blue!20,right = 0.7cm of RST_TuInc.east, minimum width = 3cm, minimum height = 1cm  ]  (TermSet) { 
		\begin{minipage}{3cm}
		\centering
		\textbf{Terminal Set} \\
		$ \mathbb{X}_{N|k}^{\mathcal{T}} $ \\
		Section \ref{Sec:TerminalSets}
		\end{minipage}
	};
	
	\coordinate (x2) at ($(OptProb.south) + (0,1.4cm) $){}; 
	
	\node [rectangle,draw , fill=orange!15, minimum width = 11.5cm, minimum height = 2.5cm  ] at (x2) (PST) {    		 	 };
	\node [rectangle, below = 0.1cm of PST.north, minimum width = 5cm, minimum height = 0.5cm ] (PST_title) {
		\begin{minipage}{9.5cm}
		\centering
		\textbf{Predicted State Tube} \\
		\end{minipage}
	};
	\node [rectangle,draw , fill=blue!20, minimum width = 3cm, minimum height = 1cm  ] at  ($(PST.west) + (2,-0.2cm) $) (PredParSet) { 
		\begin{minipage}{3cm}
		\centering
		\textbf{Predicted \\ Parameter Set} \\
		$ \hat{\Theta}_{N_\theta|k}$ \\
		Section \ref{Sec:PredParSet}
		\end{minipage}
	};
	\node [rectangle,draw , fill=blue!20, right=0.5cm of PredParSet.east, minimum width = 3cm, minimum height = 1cm  ] (PST_TuInc) { 
		\begin{minipage}{3cm}
		\centering
		\textbf{Tube Inclusions} \\
		$\hat{\mathbb{X}}_{l|k}$\\
		Section \ref{Sec:PredictedTube}
		\end{minipage}
	};
	\node [rectangle,draw , fill=blue!20, right=0.5cm of PST_TuInc.east, minimum width = 3cm, minimum height = 1cm  ] (PredCost) { 
		\begin{minipage}{3cm}
		\centering
		\textbf{Predicted \\ Cost Function} \\
		$\Omega(\hat{\mathbb{X}}_{l|k})$\\
		Section \ref{Sec:PredictedCost}
		\end{minipage}
	};

	\draw [->,>=stealth] (PredParSet.east) -- (PST_TuInc.west);
	\draw [->,>=stealth] (PST_TuInc.east) -- (PredCost.west);
	
	\end{tikzpicture}
	
	\caption{ A schematic highlighting the different parts of the proposed dual adaptive MPC algorithm.  Each block shows the main variable, set or function computed in that block.  The measurements $x_k$ are obtained from the system, and the control inputs $u_k$ computed by the optimization problem are applied to the system.}
	\label{fig:FlowChart}
\end{figure*}

\subsection{Algorithm}
The decision variables \resp{of the MPC optimization problem} are 
\begin{equation}\label{eq:decisionVariables}
\gamma_k = \left\{ 
\begin{array}{l}
\bigl\{z_{l|k},\alpha_{l|k}\bigr\}_{l=0}^{N}, \: \bigl\{\hat{z}_{l|k},\hat{\alpha}_{l|k} \bigr\}_{l=0}^{N}, \\
\bigl\{v_{l|k},\: \{\Lambda^{j}_{l|k}\}_{j=1}^{q} , \: \: \{\hat{\Lambda}^{j}_{l|k}\}_{j=1}^{q}\bigr\}_{l=0}^{N-1} ,\\
\bigl\{\bar{x}_{l|k,k},\bar{u}_{l|k,k}\bigr\}_{l=0}^{N},\: \tilde{x}_{N|k},\tilde{u}_{N|k},\tilde{\alpha}_{N|k},\{\tilde{\Lambda}_{N|k}^{j}\}_{j=1}^{q}
\end{array} \right\},
\end{equation}
and the optimization problem can be written as 
\setlength\arraycolsep{2pt}
\begin{subequations}\label{eq:OptimizationProblem}
\begin{align}
\min_{\gamma_k} \quad  \Omega \bigr(\{\hat{\mathbb{X}}_{l|k}, \bar{x}_{l|k,k},\bar{u}_{l|k,k} \}_{l=0}^{N}, &\{v_{l|k}\}_{l=0}^{N-1},\tilde{u}_{N|k}  \bigr)  \nonumber \\
%
\text{s.t.} \quad (F+GK)z_{l|k} + Gv_{l|k} + \alpha_{l|k}\bar{f} &\le \mathbf{1}, \label{Opt1}\\
    -H_x z_{0|k} -\alpha_{0|k}\mathbf{1} &\le -H_x x_k ,		\label{Opt2}\\
    \Lambda_{l|k}^{j} h_{\theta_k} + H_x d_{l|k}^{j} -\alpha_{l+1|k} \mathbf{1} &\le -\bar{w},\label{Opt3}\\
    H_x D_{l|k}^{j} &= \Lambda_{l|k}^{j} H_{\theta}, \label{Opt4}\\
    -H_x \hat{z}_{0|k} - \hat{\alpha}_{0|k}\mathbf{1} &\le -H_x x_k, \label{Opt5}\\
    \hat{\Lambda}_{l|k}^{j} \hat{h}_{\theta_\resp{l|k}} + H_x \hat{d}_{l|k}^{j} -\hat{\alpha}_{l+1|k} \mathbf{1} &\le -\bar{w},\label{Opt6}\\
    H_x \hat{D}_{l|k}^{j} &= \hat{\Lambda}_{l|k}^{j} \hat{H}_{\theta_\resp{l|k}},\label{Opt7}\\
    \begin{bmatrix} 
    A(\bar{\theta}_k) & B(\bar{\theta}_k) \\C & 0
    \end{bmatrix} \begin{bmatrix}
    \bar{x}_{l|k,k}\\\bar{u}_{l|k,k} 
    \end{bmatrix} &= \begin{bmatrix} \bar{x}_{l+1|k,k} \\ r_{l|k}\end{bmatrix},\label{Opt8}\\
	(F+GK)\tilde{x}_{N|k} + G\Tilde{u}_{N|k} + \tilde{\alpha}_{N|k}\bar{f} &\le \mathbf{1}, \label{Opt9}\\
	\tilde{\Lambda}_{N|k}^{j} h_{\theta_k} + H_x d_{N|k}^{j} -\tilde{\alpha}_{N|k} \mathbf{1} &\le -\bar{w}, \label{Opt10}\\
	H_x D_{N|k}^{j} &= \tilde{\Lambda}_{N|k}^{j} H_{\theta},\label{Opt11}\\
    z_{N|k} = \bar{x}_{N|k,k},\quad \alpha_{N|k} &\le \tilde{\alpha}_{N|k}, \label{Opt12} \\
    \begin{bmatrix} 
    A(\bar{\theta}_k){-}I & B(\bar{\theta}_k) \\C & 0
    \end{bmatrix} \begin{bmatrix}
    \tilde{x}_{N|k,k}\\\bar{u}_{N|k,k} 
    \end{bmatrix} &= \begin{bmatrix} 0 \\ r_{k+N}\end{bmatrix}, \label{Opt13}\\
    \resp{ \alpha_{l|k} \ge 0,\: \hat{\alpha}_{l|k}  \ge 0,\: \tilde{\alpha}_{N|k} } &\resp{\: \ge 0,}\\
    \resp{ \Lambda^{j}_{l|k} \ge 0,\: \hat{\Lambda}^{j}_{l|k} \ge 0,\: \tilde{\Lambda}_{N|k}^{j}} &\resp{\: \ge 0,} \\
    &\forall j \in \mathbb{N}_{1}^{q}, \: l \in \mathbb{N}_{0}^{N-1}.  \nonumber
\end{align}
\end{subequations}
\resp{The cost function of the optimization problem is defined in \eqref{eq:MPCcost}. The constraints \eqref{Opt1} model the state and input constraints, \eqref{Opt2}-\eqref{Opt4} propagate the robust state tube according to Proposition \ref{Pr:SetDynamics}. The constraints \eqref{Opt5}-\eqref{Opt7} propagate the predicted state tube as defined in Proposition \ref{Pr:PredictedSetDynamics},  and \eqref{Opt8} computes the setpoint estimates used in the stage cost and input parameterization as stated in \eqref{eq:SetpointSubspace}. The terminal constraints from \eqref{eq:RPITerminalSet} are reproduced in \eqref{Opt9}-\eqref{Opt12}, and \eqref{Opt13} estimates the  terminal setpoint as defined in \eqref{eq:SetpointSubspaceTerm}. The optimization problem \eqref{eq:OptimizationProblem} is a nonconvex bilinear program due to the bilinear constraints \eqref{Opt6}, \eqref{Opt7}. 
} 

\begin{algorithm} [t]
	\caption{Reference tracking using dual AMPC}\label{Alg:AMPC} 
	\begin{algorithmic}[1]
	    \Statex \textbf{Input: $\bar{x}, r_k$}
		\Statex \textbf{Offline} Design $K,\mathbb{X}_0$ using Algorithm \ref{Alg:Offline}
		\Statex \textbf{Online}
		\State $ k\gets 1 $
		\Repeat 
		\State Obtain the measurement $ x_k $ 
		\State Construct $ \Delta_k $ according to \eqref{eq:SimpleNonfalsified}
		\State Update $ h_{\theta_k} $ using \eqref{eq:Theta_k_LP} and compute $ \bar{\theta}_k $ using \eqref{eq:ChebyshevMax}
		\State Solve optimization problem \eqref{eq:OptimizationProblem}
		\State Apply the control input  $ u_k = K (x_k-\bar{x}_{k,k}) + v_{0|k}$
		\State $ k \gets k+1 $
		\Until $k \gets T$
	\end{algorithmic}
\end{algorithm}
The dual AMPC algorithm that can be used to solve \eqref{eq:FHOCP} is described in Algorithm \ref{Alg:AMPC}. The prestabilizing gain $K$ and the state tube shape $ \mathbb{X}_0 $ are designed offline using Algorithm \ref{Alg:Offline}. The value of $h_{\theta_k} $ is initialized according to \eqref{eq:ParameterBounds}, and an initial guess is used for $\bar{\theta}_0$. 

\begin{remark}
\resp{An important property of tracking MPC algorithms is to guarantee offset free tracking. Nevertheless, such a guarantee} can only be proven asymptotically for systems affected by disturbances and model uncertainties \cite{maeder2009linear,pannocchia2004robust}. This cannot be achieved in a finite horizon setting, such as the one considered in this paper. However, the modifications proposed in Section \ref{Sec:Tracking} are not restricted to finite horizon problems, and can be used along with AMPC to achieve offset free tracking. For such an objective, the exploration based cost function must be replaced by a certainty equivalence one, based on the parameter $\bar{\theta}_k$ which is updated using a stable observer.  Additionally, the algorithm must ensure that the estimate satisfies $\bar{\theta}_k\rightarrow \theta^*$ asymptotically, using conditions such as persistence of excitation \cite{lorenzen2019}. 
\end{remark}

\subsection{Recursive feasibility}
One of the main objectives of the proposed dual AMPC method is to ensure robust constraint satisfaction while navigating the exploration-exploitation trade-off.  
\begin{theorem}\label{Th:RecFeas}
Let Assumptions \ref{As:FullRankEstimate}, \ref{As:Feedback} and \ref{As:Contractivity} be satisfied, and the optimization problem \eqref{eq:OptimizationProblem} be feasible for the initial time step $k=0$. The trajectories of the closed loop system using the dual AMPC algorithm \eqref{Alg:AMPC} will satisfy the constraints \eqref{eq:Constraints}, and the optimization problem \eqref{eq:OptimizationProblem} will remain feasible for all $k\in\mathbb{N}_1^{T}$. 
\end{theorem}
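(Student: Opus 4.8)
The plan is to prove Theorem~\ref{Th:RecFeas} by induction on the time index $k$. The induction hypothesis is that \eqref{eq:OptimizationProblem} admits a feasible point $\gamma_k$ at time $k$ (true at $k=0$ by assumption). From such a $\gamma_k$ I would first extract the \emph{constraint satisfaction} claim and a one-step reachability statement: since $x_k\in\mathbb{X}_{0|k}=z_{0|k}\oplus\alpha_{0|k}\mathbb{X}_0$ by the first-stage inclusion in Proposition~\ref{Pr:SetDynamics}, writing $x_k=z_{0|k}+\alpha_{0|k}\delta$ with $\delta\in\mathbb{X}_0$ and using the definition of $\bar f$ in \eqref{eq:f_bar} together with the input constraint of Proposition~\ref{Pr:SetDynamics} gives $Fx_k+Gu_k=(F+GK)x_k+Gv_{0|k}-GK\bar x_{0|k,k}\le(F+GK)z_{0|k}+\alpha_{0|k}\bar f+Gv_{0|k}-GK\bar x_{0|k,k}\le\mathbf 1$, so $(x_k,u_k)\in\mathbb{Z}$. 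Next, using the two standing properties of the set-membership recursion \eqref{eq:Theta_k_LP} — namely that the true parameter always satisfies $\theta^*\in\Theta_k$ and that the update is monotone, $\Theta_{k+1}\subseteq\Theta_k$ (equivalently $h_{\theta_{k+1}}\le h_{\theta_k}$ componentwise) — the set recursion \eqref{eq:SetDynamics} in Proposition~\ref{Pr:SetDynamics} yields $x_{k+1}=A(\theta^*)x_k+B(\theta^*)u_k+w_k\in\mathbb{X}_{1|k}$ because $\theta^*\in\Theta_k$ and $w_k\in\mathbb{W}$.

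I would then construct an explicit feasible point $\gamma_{k+1}$ at time $k+1$ by shifting $\gamma_k$ and appending a terminal piece. Set the robust tube as $z_{l|k+1}=z_{l+1|k}$, $\alpha_{l|k+1}=\alpha_{l+1|k}$ for $l\in\mathbb{N}_0^{N-1}$, and take the new last set equal to the previous terminal set, $(z_{N|k+1},\alpha_{N|k+1})=(\tilde x_{N|k},\tilde\alpha_{N|k})$, keeping the terminal ingredients unchanged, $(\tilde x_{N|k+1},\tilde u_{N|k+1},\tilde\alpha_{N|k+1},\tilde\Lambda^{j}_{N|k+1})=(\tilde x_{N|k},\tilde u_{N|k},\tilde\alpha_{N|k},\tilde\Lambda^{j}_{N|k})$. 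Choose the setpoints $\{\bar x_{l|k+1,k+1},\bar u_{l|k+1,k+1}\}$ as any solution of \eqref{eq:SetpointSubspace} and of the terminal equilibrium equation for the new estimate $\bar\theta_{k+1}$; such solutions exist for any reference and any $\bar\theta_{k+1}\in\Theta_0$ by Assumption~\ref{As:FullRankEstimate}. Define the inputs using the observation that the tube-propagation and constraint data in Propositions~\ref{Pr:SetDynamics} and \ref{Pr:PredictedSetDynamics} depend on $(v_{l|k},\bar x_{l|k,k})$ only through the combination $v_{l|k}-K\bar x_{l|k,k}$: put $v_{l|k+1}=v_{l+1|k}+K(\bar x_{l|k+1,k+1}-\bar x_{l+1|k,k})$ for $l\in\mathbb{N}_0^{N-2}$ and $v_{N-1|k+1}=\tilde u_{N|k}-K\tilde x_{N|k}+K\bar x_{N-1|k+1,k+1}$, so that the effective feedback input at the last stage coincides with the terminal law $\pi_k^{\pazocal T}$ of \eqref{eq:TerminalControl}. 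Carry the multipliers over as $\Lambda^{j}_{l|k+1}=\Lambda^{j}_{l+1|k}$ for $l\le N-2$, $\Lambda^{j}_{N-1|k+1}=\tilde\Lambda^{j}_{N|k}$, and collapse the predicted tube onto the robust one, $\hat z_{l|k+1}=z_{l|k+1}$, $\hat\alpha_{l|k+1}=\alpha_{l|k+1}$, $\hat\Lambda^{j}_{l|k+1}=[\,\Lambda^{j}_{l|k+1}\ 0\,]$.

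The remaining work is a block-by-block check that $\gamma_{k+1}$ satisfies \eqref{eq:OptimizationProblem} at $k+1$. The initial inclusions \eqref{Opt2} and \eqref{Opt5} hold because $x_{k+1}\in\mathbb{X}_{1|k}=\mathbb{X}_{0|k+1}$. The shifted robust-tube constraints for $l\le N-2$ coincide with those already satisfied by $\gamma_k$ up to replacing $h_{\theta_k}$ by the smaller $h_{\theta_{k+1}}$, which only relaxes the corresponding inequality since the multipliers are nonnegative. The last stage $l=N-1$ follows from robust invariance of $\mathbb{X}_{N|k}^{\pazocal T}$: by \eqref{eq:OnlineTerminalConstraints} one has $\mathbb{X}_{N|k}\subseteq\mathbb{X}_{N|k}^{\pazocal T}$ (using $0\in\mathbb{X}_0$ and $\alpha_{N|k}\le\tilde\alpha_{N|k}$), and by Proposition~\ref{Pr:TerminalSet} and Definition~\ref{De:RobustInvariance} the set $\mathbb{X}_{N|k}^{\pazocal T}$ is robustly invariant under $\pi_k^{\pazocal T}$ for all $\theta\in\Theta_k\supseteq\Theta_{k+1}$, so $A(\theta)\mathbb{X}_{N|k}\oplus B(\theta)\pi_k^{\pazocal T}(\cdot)\oplus\mathbb{W}\subseteq\mathbb{X}_{N|k}^{\pazocal T}=\mathbb{X}_{N|k+1}$, which is exactly the $l=N-1$ inclusion with multipliers $\tilde\Lambda^{j}_{N|k}$, while the $l=N-1$ input constraint is \eqref{Opt9}. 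The terminal block \eqref{eq:RPITerminalSet} is reproduced with the same $(\tilde x_{N|k+1},\tilde u_{N|k+1},\tilde\alpha_{N|k+1},\tilde\Lambda^{j}_{N|k+1})$, again only slackened by $h_{\theta_k}\to h_{\theta_{k+1}}$. The setpoint equalities hold by construction via Assumption~\ref{As:FullRankEstimate}. Finally, the predicted-tube constraints of Proposition~\ref{Pr:PredictedSetDynamics} reduce to the already-verified robust ones: with $\hat{\mathbb{X}}_{l|k+1}=\mathbb{X}_{l|k+1}$ the stacked matrix $\hat H_{\theta_{l|k+1}}$ carries $H_\theta$ and $h_{\theta_{k+1}}$ in its leading block rows, $\hat\Theta_{l|k+1}\subseteq\Theta_{k+1}$, and $\hat\Lambda^{j}=[\,\Lambda^{j}\ 0\,]$ annihilates the additional rows, so \eqref{Opt6}--\eqref{Opt7} collapse onto \eqref{Opt3}--\eqref{Opt4}. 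This closes the induction; constraint satisfaction along the closed loop is the first-paragraph argument applied at each $k$.

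The step I expect to be the main obstacle is making the candidate construction in the second paragraph fully rigorous, since this is where the tracking and dual structure departs from textbook robust tube MPC. Because the setpoint estimates $\bar x_{l|k,k}$ and the nominal parameter $\bar\theta_k$ are recomputed online, a naive shift of $\bar x_{l|k,k}$ is infeasible for \eqref{eq:SetpointSubspace}, and the robust tube at $k+1$ is built over $\Theta_{k+1}\neq\Theta_k$ and the predicted tube over the input-dependent sets $\hat\Theta_{l|k+1}$. The two facts that unlock this — which I would isolate as short lemmas before the induction — are (i) the \emph{re-centering identity}: all tube-propagation and constraint data enter only through $v_{l|k}-K\bar x_{l|k,k}$, so any change of setpoint can be absorbed into $v$ without affecting feasibility; and (ii) \emph{monotonicity} of the set-membership recursion, $\theta^*\in\Theta_{k+1}\subseteq\Theta_k$, so every robustness inequality valid at $k$ stays valid at $k+1$. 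The predicted tube itself is not an obstacle for feasibility because it enters only the cost \eqref{eq:MPCcost} and can always be taken equal to the robust tube.
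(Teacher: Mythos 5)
Your proposal is correct and follows essentially the same argument as the paper's proof: shift the optimal solution, absorb the recomputed setpoints $\bar{x}_{l|k+1,k+1}$ (which exist by Assumption \ref{As:FullRankEstimate}) into the $v$-variables via the re-centering identity, exploit $\Theta_{k+1}\subseteq\Theta_k$ to keep the shifted tube inequalities valid, use robust invariance of the online terminal set $\mathbb{X}_{N|k}^{\pazocal{T}}$ for the last stage and reuse its variables, and take the predicted tube equal to the robust tube. Your treatment is, if anything, slightly more explicit than the paper's (e.g.\ the zero-padded multipliers $\hat{\Lambda}^{j}=[\Lambda^{j}\ 0]$ and the pointwise constraint-satisfaction check at time $k$); the only cosmetic imprecision is reusing $\tilde{\Lambda}^{j}_{N|k}$ verbatim at stage $N{-}1$ when $\alpha_{N|k}<\tilde{\alpha}_{N|k}$, where existence of valid multipliers instead follows from the if-and-only-if statement of Proposition \ref{Pr:SetDynamics}, exactly as in the paper.
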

\begin{proof}
The recursive feasibility of \eqref{eq:OptimizationProblem} will be proven by induction. The feasibility at time step $k=0$ is assumed. Let the solution of \eqref{eq:OptimizationProblem} at time step $k$ be denoted as $\gamma_k^*$, where the superscript $^*$ is added to all the variables in \eqref{eq:decisionVariables}. Moreover, denote the robust state tube as $\{\mathbb{X}_{l|k}^*\}_{l=0}^{N}$ and the terminal set as $\mathbb{X}_{N|k}^{\pazocal{T}*}$. Using these variables, a feasible solution can be computed at the time step $k+1$. 

Because the robust state tube also satisfies the set-dynamics of the predicted state tube \eqref{eq:PredictedStateTube}, one only needs to find a feasible solution for the input variables $\{v_{l|k+1}\}_{l=0}^{N-1} $, robust state tube variables $  \{z_{l|k+1},\alpha_{l|k+1}\}_{l=0}^{N}, \{\{\Lambda^{j}_{l|k+1}\}_{j=1}^{q} \}_{l=0}^{N-1}, $ and terminal set variables $\tilde{x}_{N|k+1},\tilde{u}_{N|k+1},\tilde{\alpha}_{N|k+1},\{\tilde{\Lambda}_{N|k+1}^{j}\}_{j=1}^{q} $.  

Before the optimization problem \eqref{eq:OptimizationProblem} is solved at the next time step, the parameter estimate $\bar{\theta}_k$ and set $\Theta_k$ are updated to $\bar{\theta}_{k+1}$ and $\Theta_{k+1}$ respectively. Let $\bigl\{\bar{x}_{l|k+1,k+1},\bar{u}_{l|k+1,k+1}\bigr\}_{l=0}^{N}$ be a set of feasible solutions to the equations \eqref{eq:SetpointSubspace} and \eqref{eq:SetpointSubspaceTerm}  calculated at time step $k+1$, which exists due to Assumption \ref{As:FullRankEstimate}. Consider the sequence of input variables for $l \in \mathbb{N}_1^{N}$
\begin{equation}\label{eq:RecFeas1}
    v_{l-1|k+1} = v_{l|k}^* + K(\bar{x}_{l-1|k+1} - \bar{x}_{l|k}^*).
\end{equation}
This results in the control law
\begin{align}\label{eq:RecFeas2}
\begin{split}
    u_{l-1|k+1} (x) &= K(x - \bar{x}_{l-1|k+1}) + v_{l-1|k+1} \\
    &= K(x - \bar{x}_{l|k}^*) + v_{l|k}^*, \quad \forall l \in \mathbb{N}_1^{N}, \\
\end{split} 
\end{align}
which is equivalent to the optimal control law computed at time step $k$. Similarly, by defining $v_{N-1|k+1} = \tilde{u}_{N|k}^* + K(\bar{x}_{N-1|k+1} - \tilde{x}_{N|k}^*) $, the terminal control law computed at the time step $k$ can be used to compute a feasible solution for $v_{N-1|k+1} $.  Because the updated parameter set satisfies $\Theta_{k+1} \subseteq \Theta_k$, the robust state tube variables $  \{z_{l|k+1},\alpha_{l|k+1},\{\Lambda^{j}_{l|k+1}\}_{j=1}^{q} \}_{l=0}^{N-1} $ can be computed by setting
\begin{align}\label{eq:RecFeas3}
\begin{split}
    \mathbb{X}_{l-1|k+1} &= \mathbb{X}_{l|k}^*,  \quad \forall l \in \mathbb{N}_1^{N}.
\end{split}
\end{align}
The robust invariance of the set $\mathbb{X}_{N|k}^{\pazocal{T}*}$ implies that
\begin{equation}\label{eq:RecFeas4}
\mathbb{X}_{N|k}^{\pazocal{T}*} \supseteq  A_{\text{cl}}(\theta)\mathbb{X}_{N|k}^*  \oplus B(\theta)v_{N|k}^* \oplus \mathbb{W}.
\end{equation}
Thus, the values of the robust state tube variables \{$z_{N|k+1},\alpha_{N|k+1}$\} are computed by setting $\mathbb{X}_{N|k+1} = \mathbb{X}_{N|k}^{\pazocal{T}*}$. Additionally, robust invariance of $\mathbb{X}_{N|k}^{\pazocal{T}*}$ also implies that the terminal set variables $\tilde{x}_{N|k+1},\tilde{u}_{N|k+1},\tilde{\alpha}_{N|k+1},\{\tilde{\Lambda}_{N|k+1}^{j}\}_{j=1}^{q} $ can remain unchanged from the optimal values at time step $k$. 

Using the above procedure, a feasible solution to \eqref{eq:OptimizationProblem} can be constructed for the time step $k+1$, proving the recursive feasibility of Algorithm \ref{Alg:AMPC} by induction. Finally, the constraints \eqref{eq:Constraints} are always satisfied because the robust state tube lies within the constraint set, as shown in Proposition \ref{Pr:SetDynamics}. 
\end{proof}
\begin{remark}
It can be seen that the optimization problem \eqref{eq:OptimizationProblem} is recursively feasible due to the flexibility provided by the online terminal sets. This is because even if $r_{k+1}\neq r_k$, the optimizer can choose to center the terminal set at $\tilde{x}_{N|k+1} = \tilde{x}_{N|k}^* $ to ensure constraint satisfaction. This would not be possible with the offline design of terminal sets presented in \eqref{eq:OfflineTerminalSet} and \eqref{eq:OfflineTerminalConstraints}.
\end{remark}

\resp{
\subsection{Tube inclusion approximation}\label{Sec:TubeIncl_apx}
It can be seen that the number of constraints and variables involved in the optimization problem \eqref{eq:OptimizationProblem} depends on the number of vertices of $ \mathbb{X}_0 $, i.e., $q$. This means that the computational complexity of the optimization problem can grow combinatorially with the state dimension. This drawback of robust adaptive MPC has been addressed either by choosing a restrictive structure on the $ \Theta_k $ matrix \cite{kohler2019linear}, or using a distributed optimization approach by imposing structure on the system dynamics \cite{parsi2021distributed}. In this work, we propose a novel way to limit the optimization problem size by parameterizing the Lagrange multiplier variables $ \Lambda^{j}_{l|k}, \hat{\Lambda}^{j}_{l|k}, \tilde{\Lambda}^{j}_{N|k}$.

Define the matrices $ E_i = \begin{bmatrix}
A_i{+}B_iK & {-}B_iK & B_i
\end{bmatrix}$, $  \forall i \in \mathbb{N}_{0}^{p} $ and vectors $ e_{l|k} = \begin{bmatrix}
z_{l|k}^{\tr} & \bar{x}_{l|k,k}^{\tr} & v_{l|k}^{\tr}
\end{bmatrix}^{\tr} $, $  \forall l \in \mathbb{N}_{0}^{N} $. At each time step, before solving the MPC optimization problem, the matrices $ \Lambda^{j}_k \in \mathbb{R}^{n_x \times n_\theta}, \forall j\in \mathbb{N}_{1}^{q} $ are computed such that 
\begin{align}\label{eq:Lambdaj_apx}
\Lambda^j_k H_{\theta} &= H_x D(x^{j},Kx^{j}).
\end{align}
Then, the following quantities are defined
\begin{align}\label{eq:Lam_bar}
\underline{\Lambda}_k &= \min_{j\in \mathbb{N}_{1}^{q}} \Lambda^j_k,  \quad 
\bar{\lambda}_k = \max_{j\in \mathbb{N}_{1}^{q}} \Lambda^j_k h_{\theta_k} + H_x(A_0 + B_0 K)x^j ,
\end{align}
where the minimization and maximization are performed elementwise. Then, the following lemma reformulates the vertex-wise tube inclusion constraints so that they can be applied on the state tube variables.
\begin{proposition}\label{Prop:Lambda_apx}
Let the matrices $ \Lambda_{l|k} $ satisfy
\begin{subequations}\label{eq:Lambda_approx}
\begin{align}	
	&\Lambda_{l|k} + \alpha_{l|k} \underline{\Lambda}_k \ge 0, \label{eq:Lambda_approx1}\\
	\Lambda_{l|k} H_{\theta} &= H_x \begin{bmatrix}
	E_1 & E_2 & \ldots E_p
	\end{bmatrix} \otimes e_{l|k}, \label{eq:Lambda_approx2}\\
	\Lambda_{l|k} h_{\theta_k} &+ \alpha_{l|k} \bar{\lambda}_k + H_x E_0 e_{l|k} - H_x z_{l+1|k} - \alpha_{l+1|k}\mathbf{1} \le -\bar{w}. \label{eq:Lambda_approx3}
\end{align}
\end{subequations}
Then $ \Lambda^{j}_{l|k} = \Lambda_{l|k} + \alpha_{l|k} \Lambda_k^{j} $ satisfies the constraints \eqref{Opt3}, \eqref{Opt4} $ \forall j\in \mathbb{N}_{1}^{q} $. 
\end{proposition}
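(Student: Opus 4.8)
The plan is to verify the claim by direct substitution: show that the affine ansatz $\Lambda_{l|k}^{j}=\Lambda_{l|k}+\alpha_{l|k}\Lambda_k^{j}$ reproduces, for every vertex index $j\in\mathbb{N}_1^{q}$, the constraints \eqref{Opt3} and \eqref{Opt4} of Proposition~\ref{Pr:SetDynamics}, using that the conditions \eqref{eq:Lambda_approx1}--\eqref{eq:Lambda_approx3} are exactly what remains after the vertex index is separated out.

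First I would expand the vertex quantities in \eqref{eq:x_jlk}. Writing $x_{l|k}^{j}=z_{l|k}+\alpha_{l|k}x^{j}$ and using the input parameterization \eqref{eq:InputParameterization}, a short computation shows that the $i$-th column of $D_{l|k}^{j}$ equals $E_i e_{l|k}+\alpha_{l|k}(A_i+B_iK)x^{j}$, since the first block of $E_i$ is $A_i+B_iK$ and $e_{l|k}$ stacks $(z_{l|k},\bar{x}_{l|k,k},v_{l|k})$; this uses $E_i\,[x^{j};0;0]=(A_i+B_iK)x^{j}$. Hence $D_{l|k}^{j}=\bar{D}_{l|k}+\alpha_{l|k}D(x^{j},Kx^{j})$, where $\bar{D}_{l|k}$ is the matrix with columns $E_i e_{l|k}$ (this is the right-hand side of \eqref{eq:Lambda_approx2} up to the factor $H_x$), and similarly $d_{l|k}^{j}=E_0 e_{l|k}+\alpha_{l|k}(A_0+B_0K)x^{j}-z_{l+1|k}$. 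Substituting the ansatz into \eqref{Opt4} and using \eqref{eq:Lambda_approx2} together with the defining relation \eqref{eq:Lambdaj_apx} gives $\Lambda_{l|k}^{j}H_{\theta}=H_x\bar{D}_{l|k}+\alpha_{l|k}H_xD(x^{j},Kx^{j})=H_xD_{l|k}^{j}$, so the equality holds for every $j$. Nonnegativity of $\Lambda_{l|k}^{j}$, which is part of Proposition~\ref{Pr:SetDynamics}, follows since $\Lambda_k^{j}\ge\underline{\Lambda}_k$ elementwise by \eqref{eq:Lam_bar} and $\alpha_{l|k}\ge0$, hence $\Lambda_{l|k}^{j}\ge\Lambda_{l|k}+\alpha_{l|k}\underline{\Lambda}_k\ge0$ by \eqref{eq:Lambda_approx1}.

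It then remains to check \eqref{Opt3}. Substituting the ansatz and the expansion of $d_{l|k}^{j}$ and collecting the terms multiplied by $\alpha_{l|k}$ yields $\Lambda_{l|k}^{j}h_{\theta_k}+H_xd_{l|k}^{j}=\Lambda_{l|k}h_{\theta_k}+H_xE_0e_{l|k}-H_xz_{l+1|k}+\alpha_{l|k}\bigl(\Lambda_k^{j}h_{\theta_k}+H_x(A_0+B_0K)x^{j}\bigr)$. Since $\alpha_{l|k}\ge0$ and, by \eqref{eq:Lam_bar}, $\Lambda_k^{j}h_{\theta_k}+H_x(A_0+B_0K)x^{j}\le\bar{\lambda}_k$ elementwise for every $j$, the last term is bounded above by $\alpha_{l|k}\bar{\lambda}_k$; adding $-\alpha_{l+1|k}\mathbf{1}$ and invoking \eqref{eq:Lambda_approx3} gives $\Lambda_{l|k}^{j}h_{\theta_k}+H_xd_{l|k}^{j}-\alpha_{l+1|k}\mathbf{1}\le-\bar{w}$ for all $j$, which is \eqref{Opt3}.

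The only genuine difficulty is careful bookkeeping rather than any deep argument: tracking the block/Kronecker structure of $E_i$ and $e_{l|k}$ so that the column-wise identity $D_{l|k}^{j}=\bar{D}_{l|k}+\alpha_{l|k}D(x^{j},Kx^{j})$ is stated correctly, and being mindful that $h_{\theta_k}$ may have negative entries — which is exactly why $\bar{\lambda}_k$ in \eqref{eq:Lam_bar} is defined as the direct elementwise maximum of $\Lambda_k^{j}h_{\theta_k}+H_x(A_0+B_0K)x^{j}$ (rather than split into separate bounds on $\Lambda_k^{j}h_{\theta_k}$) and why this bound is preserved under multiplication by the nonnegative scalar $\alpha_{l|k}$.
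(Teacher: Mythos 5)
Your proposal is correct and follows essentially the same route as the paper's proof: expand the vertex quantities $D_{l|k}^{j}$ and $d_{l|k}^{j}$ into a vertex-independent part ($E_i e_{l|k}$, $E_0 e_{l|k}$) plus a vertex-dependent part scaled by $\alpha_{l|k}$, handle the equality \eqref{Opt4} via \eqref{eq:Lambdaj_apx} and \eqref{eq:Lambda_approx2}, and bound the vertex-dependent term in \eqref{Opt3} by $\alpha_{l|k}\bar{\lambda}_k$ using \eqref{eq:Lam_bar} and \eqref{eq:Lambda_approx3}. Your explicit verification of elementwise nonnegativity of $\Lambda_{l|k}^{j}$ via \eqref{eq:Lambda_approx1} is a detail the paper leaves implicit, and your expansion of $d_{l|k}^{j}$ (with the vector term $\alpha_{l|k}(A_0+B_0K)x^{j}$) is actually cleaner than the corresponding display in the paper's proof.
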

\begin{proof}
First, consider the constraints \eqref{Opt4}, where the term $ H_x D^j_{l|k} $ can be written as
\begin{align*}
H_x D^j_{l|k} =  H_x \begin{bmatrix}
E_1 & E_2 & \ldots E_p
\end{bmatrix} \otimes e_{l|k} + \alpha_{l|k}  H_x D(x^{j},Kx^{j})
\end{align*}
Thus, the vertex dependent part of the equality constraint can be separated from the vertex-independent part. The former is ensured by \eqref{eq:Lambdaj_apx}, and the latter by \eqref{eq:Lambda_approx2}. Then, consider the inequality constraints \eqref{Opt3}. Here, the term $ H_x d^j_{l|k} $ can be written as
\begin{align*}
H_x d^j_{l|k} =  H_x E_0 e_{l|k} -H_x z_{l+1|k} + \alpha_{l|k}  H_x D(x^{j},Kx^{j}).
\end{align*}
When the parameterization $ \Lambda^{j}_{l|k} = \Lambda_{l|k} + \alpha_{l|k} \Lambda^{j}_k $ is applied to \eqref{Opt3}, it can be seen that $ \alpha_{l|k} \bar{\lambda}_k $ bounds the effect of the vertex dependent terms.  Thus, the inequality \eqref{eq:Lambda_approx3} ensures \eqref{Opt3} for all the vertices of the state tube.
\end{proof}

Thus, by using the proposed parameterization, the robust state tube can be constructed using fewer optimization variables and constraints as shown in \eqref{eq:Lambda_approx}, instead of using the constraints \eqref{Opt3} and \eqref{Opt4} defined at each vertex of the state tube. A similar approach can be used to parameterize $ \tilde{\Lambda}^{j}_{N|k} $ as $ \tilde{\Lambda}_{N|k} + \alpha_{N|k} \Lambda^{j}_k $, thereby replacing the terminal constraints \eqref{Opt10}, \eqref{Opt11} with
\begin{subequations}\label{eq:LambdaN_approx}
\begin{align}	
	&\tilde{\Lambda}_{N|k} + \alpha_{N|k} \underline{\Lambda}_k \ge 0, \label{eq:LambdaN_approx1}\\
	\tilde{\Lambda}_{N|k} H_{\theta} &= H_x \begin{bmatrix}
	E_1 & E_2 & \ldots E_p
	\end{bmatrix} \otimes \tilde{e}_{N|k}, \label{eq:LambdaN_approx2}\\
	\tilde{\Lambda}_{N|k} h_{\theta_k} &{+} \alpha_{N|k} \bar{\lambda}_k {+} H_x E_0 \tilde{e}_{N|k} - H_x \tilde{x}_{N|k} - \alpha_{l+1|k}\mathbf{1} \le -\bar{w}, \label{eq:LambdaN_approx3}
\end{align}
\end{subequations}
where $ \tilde{e}_{N|k} = \begin{bmatrix}
\tilde{x}_{N|k}^{\tr} & \tilde{x}_{N|k}^{\tr} & \tilde{u}_{N|k}^{\tr}
\end{bmatrix}^{\tr} $.

Finally, the Lagrange multipliers defining the predicted state tube are parameterized as  
\begin{align}\label{eq:Lambda_hat_par}
\hat{\Lambda}^{j}_{l|k} = \begin{bmatrix}
\hat{\Lambda}_{l|k}+\hat{\alpha_{l|k}}\Lambda^j_k & \hat{\Lambda}^{\Delta}_{l|k}
\end{bmatrix}.
\end{align}
It can be seen that the first $ n_{\theta} $ columns of $ \hat{\Lambda}^{j}_{l|k} $ are parameterized using the precomputed $ \Lambda^j_k $, while the predicted constraints in $ \hat{\Delta}_{l|k} $ will be multiplied by $ \hat{\Lambda}^{\Delta}_{l|k} $. Using \eqref{eq:Lambda_hat_par}, the tube inclusion constraints \eqref{Opt6} and \eqref{Opt7} can be replaced by the sufficient conditions
\begin{subequations}\label{eq:Lambda_hat_approx}
	\begin{align}	
	&\hat{\Lambda}_{l|k}+\hat{\alpha_{l|k}} \underline{\Lambda}_k \ge 0, \quad \hat{\Lambda}^{\Delta}_{l|k} \ge 0,	 \label{eq:Lambda_hat_approx1}\\
	\begin{bmatrix}
	\hat{\Lambda}_{l|k} & \hat{\Lambda}^{\Delta}_{l|k}
	\end{bmatrix}
	 \hat{H}_{\theta_{l|k}} &= H_x \begin{bmatrix}
	E_1 & E_2 & \ldots E_p
	\end{bmatrix} \otimes \hat{e}_{N|k}, \label{eq:Lambda_hat_approx2}\\
	\begin{bmatrix}
	\hat{\Lambda}_{l|k} & \hat{\Lambda}^{\Delta}_{l|k}
	\end{bmatrix}
	\hat{h}_{\theta_{l|k}} &{+} \hat{\alpha}_{l|k} \bar{\lambda}_k {+} H_x E_0 \hat{e}_{l|k} \nonumber \\
	& \quad - H_x \hat{z}_{l+1|k} - \hat{\alpha}_{l+1|k}\mathbf{1} \le -\bar{w}. \label{eq:Lambda_hat_approx3}
	\end{align}
\end{subequations}
The sufficiency of \ref{eq:Lambda_hat_approx} follows from a similar proof as in Proposition \ref{Prop:Lambda_apx}. Thus the number of nonconvex constraints in the optimization problem do not increase with the number of vertices in the state tube set $ \mathbb{X}_0 $.  

The amount of conservatism induced by the proposed sufficient conditions depends on the matrices $ \Lambda^{j}_{k} $ computed according to \eqref{eq:Lambdaj_apx}. Since the matrix $ H_{\theta} $ describes the bounded set $ \Theta_k $, it has more rows than columns. Thus, the choice of $ \Lambda^{j}_{k} $ is non-unique based on \eqref{eq:Lambdaj_apx} alone. The quantities $ \underline{\Lambda}_k , \bar{\lambda}_{k}$ can be used to optimize over the feasible values of $ \Lambda^{j}_{k} $. As seen in \eqref{eq:Lam_bar} and \eqref{eq:Lambda_approx1}, negative values in $ \Lambda^{j}_{k} $ result in a minimum positive magnitude for $ \Lambda_{l|k} $, thereby growing the state tube quickly as seen in \eqref{eq:LambdaN_approx3}. In addition, large values for $ \bar{\lambda}_{k} $ directly influence the growth of the state tube in \eqref{eq:LambdaN_approx3}. The following optimization problem is solved at each time step to compute the matrices $ \Lambda^{j}_{k} $:

\begin{align}\label{eq:Lambda_Opt}
\begin{split}
\displaystyle\min_{\{\Lambda^{j}_{k}\}} \quad ||\bar{\lambda}_k||_2 &+ \mu_{\theta}||\Lambda^j_k||_{\max} , \\
\text{s.t.} \qquad \Lambda^j_k H_{\theta} &= H_x D(x^{j},Kx^{j}), \\
\bar{\lambda}_k \ge \Lambda^j_k &h_{\theta_k} + H_x(A_0 + B_0 K)x^j, \quad \forall j\in \mathbb{N}_{1}^{q},
\end{split}	
\end{align}
where $  \mu_{\theta} $ is a tuning parameter which must be chosen such that the precomputed $ \Lambda^j_k$ matrices do not have large negative numbers or result in large $ \bar{\lambda}_k $.
}

\section{Simulation Studies}\label{Sec:Simulations}
In this section, we illustrate the advantages of the proposed dual adaptive MPC scheme using comparative simulations \resp{on two systems. The first example considers a 2 state, 2 input system with 2 uncertain parameters. The second example considers a 6 state, 3 input system with 5 uncertain parameters in the model. The code to simulate both the examples is available in the online repository \cite{DAMPC_repo}. }

\subsection{Two state example}	\label{Sec:2StateEx}
For the first example, the model of the system is parameterized by the matrices
\begin{equation}\label{eq:ExampleSystem}
\begin{array}{l l l}
A_0 = \begin{bmatrix} 0.85 &  0.5 \\ 0.2 & 0.7 \end{bmatrix}, & A_1 = \begin{bmatrix} 0.1 &  0 \\ 0 & 0.2 \end{bmatrix}, &A_2 = \begin{bmatrix} 0 &  0 \\ 0 & 0 \end{bmatrix}, \\
B_0 = \begin{bmatrix} 1 & 0.4\\ 0.2& 0.6 \end{bmatrix},  & B_1 = \begin{bmatrix} 0 &  0 \\ 0 & 0 \end{bmatrix},  & B_2 = \begin{bmatrix} 0 & 0.2\\ 0 & 0.35 \end{bmatrix},  \\
\end{array} 
\end{equation}
where the state and input matrices are affected by different uncertain parameters. The outputs to be tracked are the two state variables, and thus a target state trajectory is given by $r_k$. In addition, because $m=n_y=2$ the constraint \eqref{eq:SetpointSubspace} defines a unique input setpoint. The uncertainty is defined by the parameter set $\Theta = \{\theta\in\mathbb{R}^{2}|\: ||\theta||_\infty \le 1 \}$. Note that the parameter $[\theta]_2$ is affecting the second input channel only, and its relative effect on $B(\theta)$ is larger compared to the effect of $[\theta]_1$ on $A(\theta)$. The constraint set $\mathbb{Z}$ is defined as 
\begin{align*}
	\mathbb{Z} &= \left\{(x,u){\in} \mathbb{R}^{2\times 2}\left|\: \begin{array}{rl}
	||x||_\infty &\le 3 \\
	||u||_\infty &\le 2 \\ 
	\end{array}\right.  \right\},
\end{align*}
and the disturbance set is $\mathbb{W}:= \{w\in\mathbb{R}^2|\: ||w||_\infty \le 0.1\} $. The cost function is defined by the matrices $Q = 2I_2$ and $ R = I_2$. 

The controller using Algorithm \ref{Alg:AMPC} is referred to as dual AMPC (DAMPC) in this section. The performance of DAMPC is compared to the adaptive MPC algorithm proposed in \cite{lorenzen2017adaptive}, referred to as passive AMPC (PAMPC). This is because PAMPC updates the parameter set using set membership, but does not model the dual effect of the control input in the optimization problem. This makes the uncertainty learning passive. The PAMPC controller was proposed in \cite{lorenzen2017adaptive} for a regulation task. For the sake of comparison, the algorithm is modified to enable setpoint tracking using the offline terminal set approach presented in Section \ref{Sec:TerminalSets}.  In addition, the PAMPC controller also uses \eqref{eq:SetpointSubspace} to estimate the setpoints in the MPC cost function. Both the controllers are used to track a piecewise constant reference trajectory with five different setpoints for $T=100$ time steps, and use a prediction horizon of $N=8$ steps. \resp{The DAMPC controller used a lookahead horizon $ N_{\theta}=8 $ steps. The control gain $ K $ is designed such that the unit hyperbox in 2 dimensions satisfies Assumption \ref{As:Contractivity}, with $ \lambda_c = 0.96 $. This implies $ n_x{=}4$ and $ q{=}4 $ for the chosen $ \mathbb{X}_0 $.} The simulations are performed for 50 different realizations of the parameter $\theta^*\in\Theta$. For each realization of $\theta^*$, 4 simulations with different disturbance sequences $\{w_k\}_{k=0}^{T} \in \mathbb{W}$ are performed. The values of $\theta^*$ and $\{w_k\}_{k=0}^{T}$ are generated at random with a uniform distribution, \resp{ and the same parameters and disturbance sequences are used in closed loop for the two controllers}.  \resp{Moreover, a third controller is also designed with perfect knowledge of $ \theta^* $. This controller, called the FHOCP controller, computes an input sequence for the whole remaining control horizon (from $k$ to $ T $) instead of the a shorter prediction horizon. Moreover, the controller also constructs a state tube using $ \mathbb{X}_0 $ to be robust against the additive disturbance. This controller is used to quantify the loss of performance caused by the exploratory inputs applied to the system in order to identify the parameters. } All the controllers are initialized with the same initial guess $\bar{\theta}_0 = [0.1,0.1]\tr$, and the initial state of the system is at the origin. 

The closed loop costs achieved by the controllers for the 200 simulations are compared in Fig. \ref{fig:cost}, where the frequency of occurrence of the costs is shown on y-axis. The average closed loop cost achieved by the \resp{FHOCP controller is 75.5,} DAMPC  is \resp{81.5} and PAMPC is \resp{172.1. It can be seen from the distribution of costs that the FHOCP controller results in a slightly improved performance over DAMPC, especially in the simulations with low closed loop costs.} In order to provide more insight on the variance of the costs, the distribution of the costs achieved by PAMPC and DAMPC is plotted as a function of \resp{$ \theta^* $} in Fig. \ref{fig:ScatterCost}, where the costs are averaged over the 4 realizations of $w_k$ for each $\theta^*$.
\setlength{\dbltextfloatsep}{6pt}
\begin{figure}[t]
    \centering
    \includegraphics{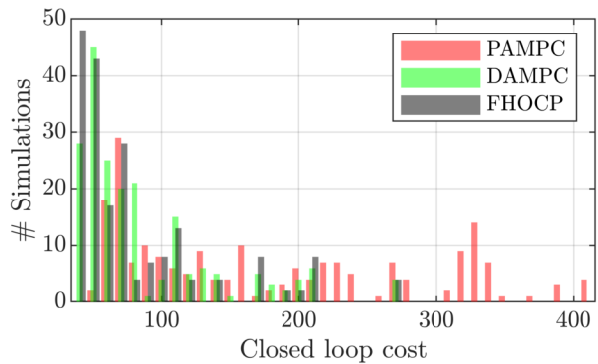}
    \caption{Comparison of closed loop costs over 200 simulations with different realizations of uncertainty and disturbance.} 
    \label{fig:cost}
\end{figure}
\setlength{\dbltextfloatsep}{6pt}
\begin{figure}[t]
    \centering
    \includegraphics{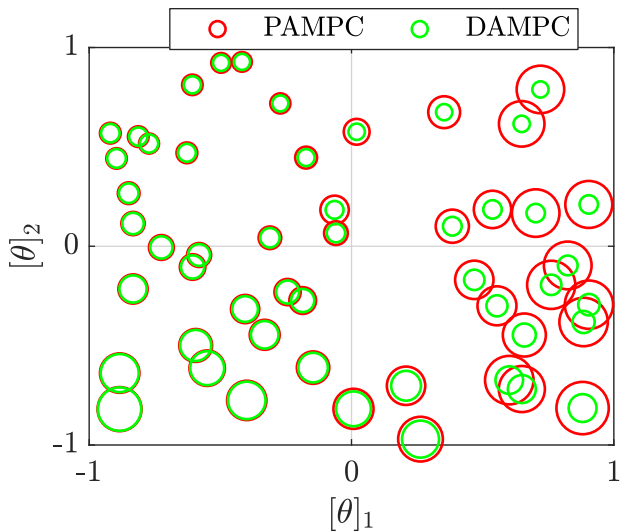}
    \caption{Distribution of costs as a function of the true parameter, averaged over the disturbance realizations. The center of each circle represents the true parameter, and the area of the circle is proportional to the averaged cost.} 
    \label{fig:ScatterCost}
\end{figure}
In this figure, each circle is centered at the true parameter and its area represents the averaged closed loop cost for that parameter.  The figure can be analyzed in four quadrants. In the upper-left quadrant, both controllers show similar performance and relatively small closed loop costs. This represents a region where passive exploration is sufficient to reduce the uncertainty and track the given trajectory. In the upper- and lower-right quadrants, the DAMPC algorithm shows noticeably smaller costs. This indicates that true systems from this region of parameter space benefit from the active uncertainty learning. Finally, most systems from the lower-left quadrant have relatively higher costs for both algorithms. This is because some of the true input setpoints $u_k^*$ lie outside the feasible region for these systems. 

\resp{
The closed loop trajectories of the system for one realization of the true parameters ($\theta^* = [ {-}0.06 ,  0.07]\tr$) are shown in Fig. \ref{fig:traj}. Only the first 60 time steps of the simulation are shown, because both DAMPC and PAMPC controllers identify the parameters and have a similar response after this time for this system. For the DAMPC controller, three different values of $ N_\theta $ were used, and the trajectories are labeled as $D_2, D_5, D_8$ for $ N_{\theta} {=} 2,5,8 $ respectively. It can be seen that all the DAMPC controllers excite the system before $ t=20 $, although the reference to be tracked is 0 and the system is already close to the origin. This is an exploratory input signal, which the controller activates immediately after a change of reference setpoint is seen in the prediction horizon. The $ D_2 $ controller uses large input signals to identify the plant, since it has assumes a short experiment length for identification. The $ D_5 $ and $ D_8 $ controllers use smaller excitation signals, and achieve lower closed loop costs. The PAMPC controller does not use any exploratory excitation in the initial part of the trajectory, as expected. This means that the uncertainty set used by the PAMPC controller when the setpoint change occurs is difficult to ensure robustness compared to the one used by the DAMPC controllers, and it results in a poor setpoint tracking performance. Moreover, it can be seen that due to the large uncertainty in the second input channel, the PAMPC controller does not fully use $[u]_2$ until $ t=34 $ when passive exploration has considerably reduced the uncertainty. }


\resp{The simulations were performed using a Intel Xeon Gold 5118 processor with 2GB RAM. All the optimization problems were implemented using YALMIP \cite{Lofberg2004}. The optimization problem of the PAMPC algorithm is a linear program, and was solved using MOSEK \cite{mosek}. The average computation times for the solving the PAMPC optimization problem was $ 0.5s $. The DAMPC optimization problem is nonconvex, due to the bilinear equalities and inequalities in \eqref{eq:OptimizationProblem}. This optimization problem was solved using IPOPT \cite{wachter2006implementation}, which uses an interior-point based line-search algorithm. The optimization problems were warm-started with the solution obtained at the previous time step. The average computation times for solving the DAMPC algorithm for $ N_{\theta} = 2,5,8 $ were $ 0.5s, 1.1s, 1.8s $ respectively. The average values of the relative increase in closed loop cost, of DAMPC compared to FHOCP were 16.3\%, 11.6\%, 10.8\% for $ N_\theta = 2,5,8$ respectively. Thus, it can be seen that increasing $ N_\theta $ improves performance and also increases the computational cost. }

\setlength{\dbltextfloatsep}{6pt}
\begin{figure}[t]
	\centering
	\includegraphics{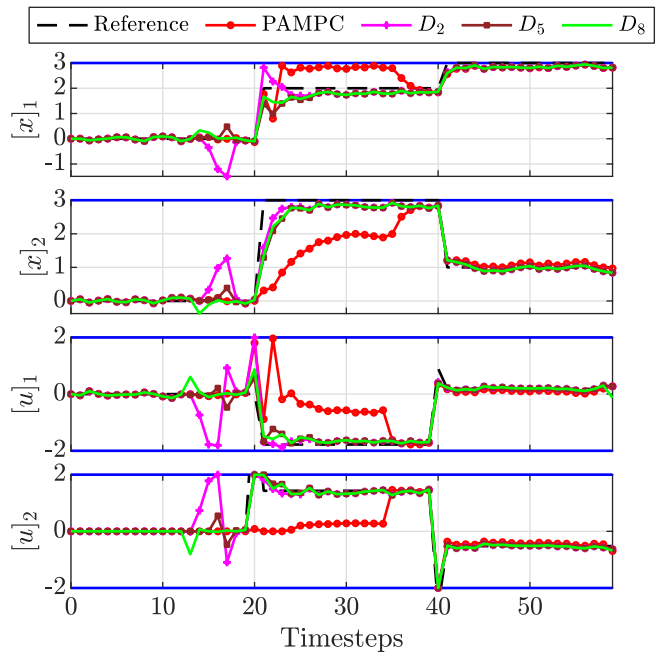}
	\caption{Comparison of trajectories for PAMPC and DAMPC (with $ N_{\theta} =2,5,8 $) for the first 60 timesteps. The true state and input setpoints are shown as a dashed line, and the constraints are shown in blue. Recall that the controllers do not have knowledge of the true input setpoints.} 
	\label{fig:traj}
\end{figure}

\resp{	
\subsection{Six state example}		\label{Sec:6StateEx}
As a second simulation example, we consider a mass-spring damper system with three masses $ m_1, m_2, m_3 $, each weighing 1kg and connected along a line by two springs and dampers. Using the position and velocity of each mass as the state of the system, the state matrix can be written as
\begin{align*}
A = \begin{bmatrix}
0 & 1 & 0  & 0 & 0 & 0 \\
{-}k_{12} & {-}c_{12} & k_{12} & c_{12} & 0 & 0 \\ 
0 & 0 & 0  & 1 & 0 & 0 \\
k_{12} & c_{12} & {-}k_{12}{-}k_{23} & {-}c_{12}{-}c_{23} & 0 & 0 \\
0 & 0 & 0  & 0 & 0 & 1 \\
0 & 0 & k_{23} & c_{23} & -k_{23} & -c_{23}
\end{bmatrix}, 
\end{align*}
and the input matrix as
\begin{align*}
B = 
\begin{bmatrix}
0 & F_g & 0 & 0 & 0 & 0 \\
0 & 0 & 0 & F_g & 0 & 0 \\
0 & 0 & 0 & 0 & 0 & F_g \\
\end{bmatrix}^\tr
\end{align*}
where $ k_{ij}$ and $ c_{ij} $ represent the spring constant and damping coefficient of the elements connecting masses $ i $ and $ j $ respectively. In addition, $ F_g $ is an actuator gain on the control input to give the force acting on each mass. The spring constants have the nominal values $ k_{12} = 3.2 Nm^{-1}, k_{23} =5.8 Nm^{-1} $ and an uncertainty of $ \pm 10\% $ of their nominal value. The damping coefficients are given as $ c_{12} = 2.3 Nsm^{-1}, c_{23} = 4.5 Nsm^{-1}  $ with a $ \pm 5\% $  uncertainty. The actuator gain $ F_g = 6.4 $ with a $ \pm 7\% $ uncertainty. The system dynamics are discretized using Euler discretization with a sampling time of $ 0.1s $ to preserve the uncertainty structure. The resulting model has 6 states (positions and velocities of the masses) and 3 inputs. Additionally, an additive disturbance with a maximum magnitude of 0.05 is acting on each state of the system. The outputs to be tracked are the positions of the masses, and a reference trajectory of 75 timesteps is considered. The states and inputs of the system are constrained to be within the bounds $ \pm 5 $. The cost function is defined by the matrices $ Q = 2 I_3  $ and $ R = I_3 $.

The tracking performance with PAMPC and DAMPC controllers is shown in Figure \ref{fig:traj_6s}, where the highest and lowest values of the positions at each timestep over 150 simulations (50 random realizations of $ \theta^* $, each with 3 random realizations of $w_k$) are plotted. The DAMPC controller has a prediction horizon of $ N=6 $ steps and lookahead horizon $ N_{\theta} = 4 $. The set $ \mathbb{X}_0 $ is designed using Algorithm \ref{Alg:Offline}, and is defined by 24 hyperplanes and 322 vertices. The system is affected by 5 parameters (2 springs, 2 dampers and 1 actuator gain), and the uncertainty set $ \Theta_k $ is represented as a hyperbox in 5 dimensions. Due to the large size of the resulting optimization problem, it was chosen to use the tube inclusion approximations proposed in Section \ref{Sec:TubeIncl_apx}. The PAMPC controller also uses a prediction horizon of $ N=6 $ steps.  It can be seen that the DAMPC controller tracks the reference better, as shown by the closeness of the upper and lower bounds to the reference trajectory. 

The closed loop cost achieved by the DAMPC controller is 85.4, and that of PAMPC is 135.6. This improved performance comes at the cost of computational complexity. The simulations were performed on a Intel Xeon E3-1585Lv5 processor with 6GB RAM. The average computation time to solve the DAMPC optimization problem was $ 110.7s $ and that of the PAMPC optimization problem is $ 105.4 s$. Note that the DAMPC optimization problem was solved using the tube inclusion approximations proposed in Section \ref{Sec:TubeIncl_apx}, while the PAMPC optimization problem was setup using tube inclusion constraints at each vertex. 



\setlength{\dbltextfloatsep}{6pt}
\begin{figure}[t]
    \centering
    \includegraphics{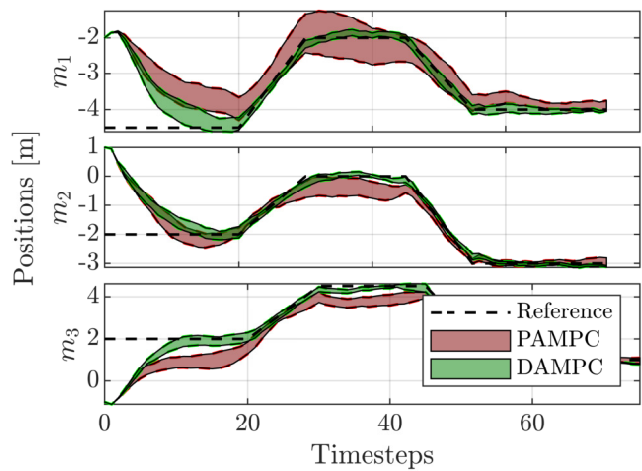}
    \caption{Reference tracking performance of the DAMPC and PAMPC controllers when applied to the mass spring damper example. For each mass, the highest and lowest values of positions at each timestep over 150 simulations are plotted. } 
    \label{fig:traj_6s}
\end{figure}
}

\section{Conclusions}
In this paper, we \resp{propose} an explicit dual control approach to approximate a finite horizon optimal tracking problem for uncertain linear systems. In order to learn the uncertainty, an adaptive MPC scheme is used. A tracking formulation is proposed to enable the computation of online terminal sets which are robustly invariant. This is particularly beneficial for adaptive MPC algorithms, since the uncertainty set is reduced online. In addition, the cost function uses a predicted state tube in order to model the dual effect of the control input, and the contractivity of the proposed terminal sets is used to approximate the cost-to-go of the optimal control problem. The proposed algorithm guarantees recursive feasibility in the face of changing reference trajectories and parameter estimates. \resp{The resulting optimization problem is nonconvex due to terms in the constraints, and its size can grow combinatorially with the state dimension of the system. A novel tube inclusion approximation is proposed to prevent this growth, which can also be applied to other adaptive MPC schemes in the literature. }
\resp{
Two simulation studies highlight
}the performance improvement achieved by the proposed algorithm over an adaptive MPC approach which does not model the dual control effect.

An interesting direction for future research is to extend the algorithm for time varying systems, where exploration would be beneficial throughout the control horizon. This is not the case for time invariant systems, since exploration can be stopped once the uncertainty set has been reduced to a small size. \resp{Another promising direction is to implement the algorithm using ellipsoidal sets to parameterize the state tube, which would simplify the offline design of contractive sets. }

\bibliographystyle{IEEEtran}
\bibliography{biblio_RAMPC_TAC}

\begin{IEEEbiography}[{\includegraphics[width=1in,height=1.25in,clip,keepaspectratio]{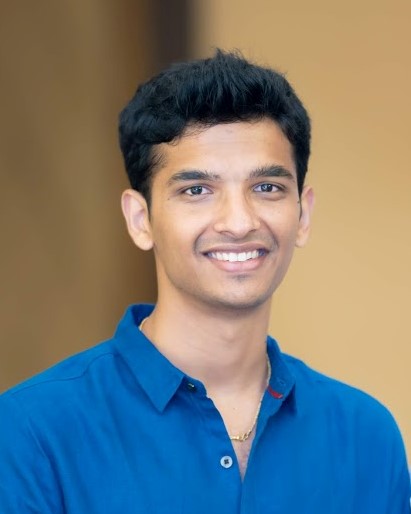}}]{Anilkumar Parsi} completed his B.Tech (Hons) in Mechanical Engineering at the Indian Institute of Technology Madras in 2016. He received an M.Sc. in Mechanical Engineering from ETH Z\"{u}rich in 2019.  He is currently a Ph.D. student in the Automatic Control Laboratory at ETH Z\"{u}rich. His research interests include model predictive control, robust and learning based control.
\end{IEEEbiography}

\begin{IEEEbiography}[{\includegraphics[width=1in,height=1.25in,clip,keepaspectratio]{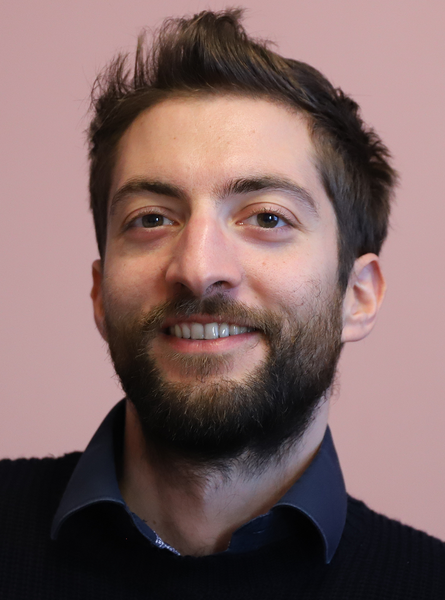}}]
	{Andrea Iannelli} completed the B.Eng. and M.Sc. degrees in Aerospace Engineering at the University of Pisa and he received his PhD from the University of Bristol. In his doctoral studies he worked on the reconciliation between robust control and dynamical systems theory, with application to linear and nonlinear aeroelastic problems. He is currently a postdoctoral researcher in the Automatic Control Laboratory at ETH Z\"{u}rich. His research interests include modeling, analysis and synthesis of uncertain control systems, data-driven methods, and dynamical systems theory.
\end{IEEEbiography}

\begin{IEEEbiography}[{\includegraphics[width=1in,height=1.25in,clip,keepaspectratio]{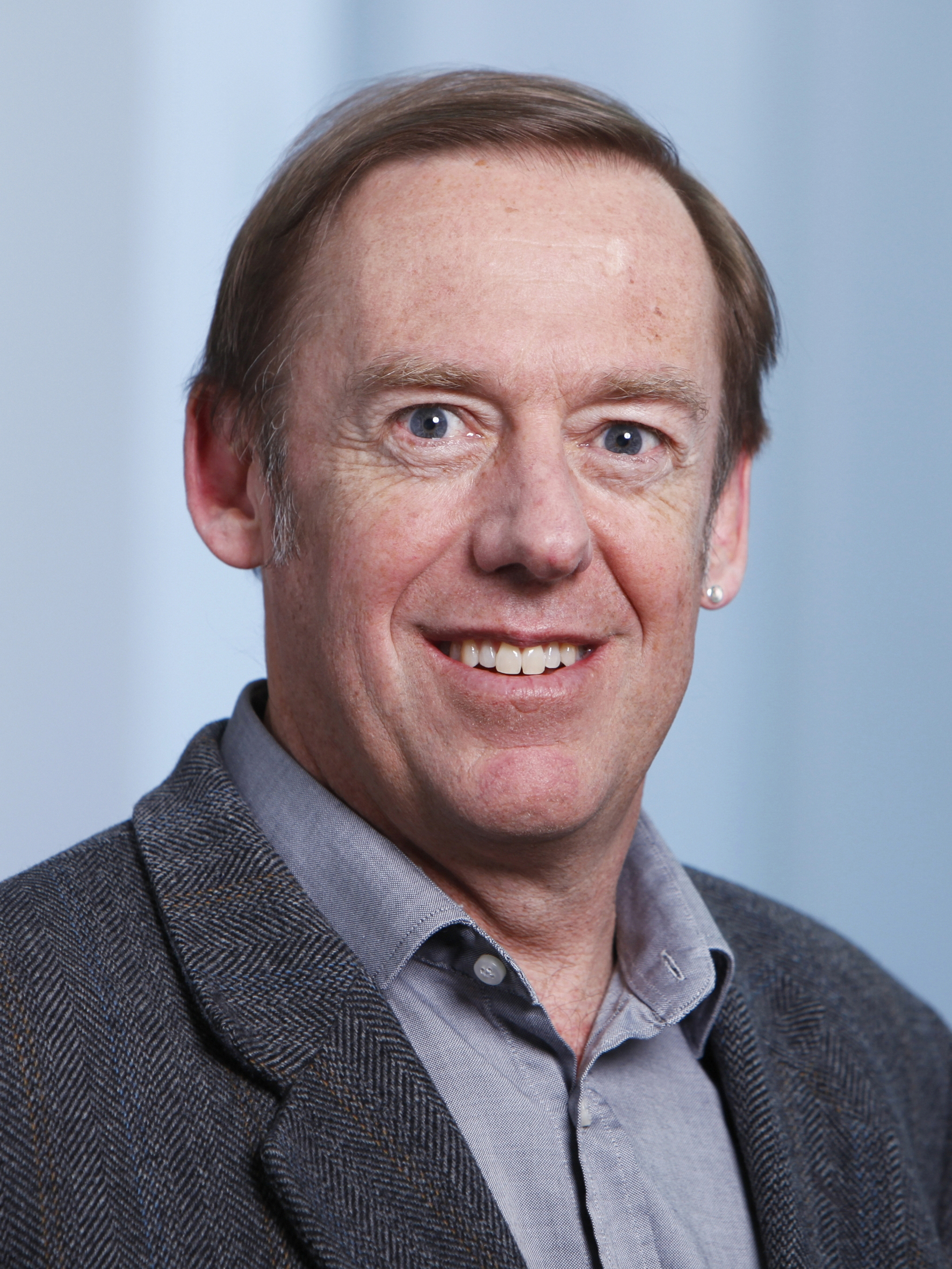}}]{Roy S. Smith} is a professor of Electrical
	Engineering at the Swiss Federal Institute of Technology (ETH),
	Zürich. Prior to joining ETH in 2011, he was on the faculty of the
	University of California, Santa Barbara, from 1990 to 2010. His
	Ph.D. is from the California Institute of Technology (1990) and his
	undergraduate degree is from the University of Canterbury (1980) in
	his native New Zealand. He has been a long-time consultant to the NASA
	Jet Propulsion Laboratory and has industrial experience in automotive
	control and power system design.  His research interests involve the
	modeling, identification, and control of uncertain systems. Particular
	control application domains of interest include chemical processes,
	flexible structure vibration, spacecraft and vehicle formations,
	aerodynamic control of kites, automotive engines, Mars
	aeromaneuvering entry design, building energy control, and
	thermoacoustic machines. He is a Fellow of the IEEE and the IFAC, an
	Associate Fellow of the AIAA, and a member of SIAM. 

\end{IEEEbiography}

\end{document}